\newcommand{\bx}{\mathbf{x}}
\newcommand{\bp}{\mathbf{p}}
\newcommand{\bq}{\mathbf{q}}
\newcommand{\bfN}{\mathbf{N}}
\newcommand{\balpha}{\boldsymbol{\alpha}}
\newcommand{\bbeta}{\boldsymbol{\beta}}
\newcommand{\dd}{\; \mathrm{d}}
\newcommand{\bbn}{\mathbb{N}}
\newcommand{\bbz}{\mathbb{Z}}
\newcommand{\bbr}{\mathbb{R}}
\newcommand{\sep}{\;:\;}
\author{Erez Nesharim, Rene R\"uhr, Ronggang Shi}
\title{Metric Diophantine approximation with congruence conditions}
\date{}
\thanks{*E. Nesharim was supported by EPSRC Programme Grant: EP/J018260/1. R. R\"{u}hr was supported by the S.N.F., project number 168823. R. Shi is supported by NSFC 11871158. This project has received funding from the European Research Council (ERC) under the European Union’s Horizon 2020 research and innovation program (grant agreement No. 754475).}
\begin{document}
\maketitle

\begin{abstract}
We prove a version of the Khinchine--Groshev theorem for Diophantine approximation of matrices subject to a congruence condition. The proof relies on an extension of the Dani correspondence to the quotient by a congruence subgroup. This correspondence together with a multiple ergodic theorem are used to study rational approximations in several congruence classes simultaneously. The result in this part holds in the generality of weighted approximation but is restricted to simple approximation functions.
\end{abstract}

\section{Introduction}

For positive integers $m$ and $n$,  let $\mathrm{M}_{m,n}(\R)$ denote the space of real $m\times n$-matrices. Let $\|\cdot\|$ denote the maximum norm on $\R^k$ for any positive integer $k$. An \emph{approximation function} is a function $\psi:\bbn\to(0,\infty)$.
One of the  main theorems in Metric Number Theory is the following:

\begin{theorem}[Khinchin--Groshev Theorem]\label{thm:K} 
Let $\psi$ be a 
non-increasing approximation function. If
\begin{equation}\label{eq:divergence}
\sum_{n=1}^\infty\psi(n)=\infty
\end{equation}
then for almost every $ \theta \in \mathrm{M}_{m,n}(\R)$ there are infinitely many $(\bp,\bq)\in\bZ^m\times \bZ^n$ satisfying
\begin{equation}\label{eq:psiApproximation}
\| \theta  \mathbf{q} + \mathbf{p}\|^m\leq \psi\left(\|\mathbf{q}\|^n\right).
\end{equation}
\end{theorem}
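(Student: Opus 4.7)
The plan is to prove Theorem~\ref{thm:K} via the Dani correspondence. To each $\theta \in \mathrm{M}_{m,n}(\bbr)$ one associates the unimodular lattice $\Lambda_\theta = u_\theta \bbz^{m+n}$, where $u_\theta$ is the block-unipotent matrix with identity diagonal blocks and $\theta$ in the upper-right block. Acting by the diagonal one-parameter flow $a_t = \mathrm{diag}(e^{t/m} I_m, e^{-t/n} I_n)$ on the homogeneous space $X = \mathrm{SL}_{m+n}(\bbr)/\mathrm{SL}_{m+n}(\bbz)$, a direct calculation shows that \eqref{eq:psiApproximation} has a solution with $\|\bq\|^n \approx e^t$ precisely when $a_t \Lambda_\theta$ contains a nonzero vector of the form $(v_1,v_2) \in \bbr^m \times \bbr^n$ with $\|v_1\| \leq C(e^t \psi(e^t))^{1/m}$ and $\|v_2\|$ uniformly bounded. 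Call the corresponding cusp target $E_t \subset X$. Theorem~\ref{thm:K} then reduces to the dynamical statement that for Lebesgue-almost every $\theta$, the orbit $\{a_t \Lambda_\theta\}$ enters $E_t$ for arbitrarily large $t$.

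Using Siegel's integration formula on $X$, the invariant probability measure $\mu_X(E_t)$ is comparable to $e^t \psi(e^t)$ in the small-volume regime. By Cauchy condensation, monotonicity of $\psi$ then implies that $\sum_k \mu_X(E_{t_k}) = \infty$ along the dyadic sequence $t_k = k \log 2$ if and only if \eqref{eq:divergence} holds. Hence the hypothesis provides the divergent sum needed for a Borel--Cantelli-type conclusion.

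The central step is converting this measure-theoretic divergence into an almost-every-$\theta$ statement. I would use effective mixing of $a_t$ on $X$, i.e., exponential decay of matrix coefficients for $\mathrm{SL}_{m+n}(\bbr)$, to establish quasi-independence of the events $\{a_{t_k} \Lambda_\theta \in E_{t_k}\}$ as $\theta$ varies in a bounded open set, viewed along the unipotent slice $\theta \mapsto \Lambda_\theta$. With quasi-independence and the divergent sum in hand, the divergent Borel--Cantelli lemma delivers the conclusion, and one transfers from lattice-space to the $\theta$-variable using the $u_\theta$-invariance of Lebesgue measure on $\mathrm{M}_{m,n}(\bbr)$.

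The main obstacle is the noncompact nature of $E_t$, which lives in the cusp of $X$ where mixing is delicate and quantitative counting requires care. I would handle this by truncating $E_t$ at a bounded height in a Siegel set and estimating the discarded cusp portion via Minkowski-type volume bounds, showing that the truncation preserves both the divergence and the quasi-independence up to acceptable error. Monotonicity of $\psi$ enters in the dyadic discretization, ensuring that the shape of $E_t$ varies regularly in $t$ so that solutions at intermediate scales are captured up to uniform constants.
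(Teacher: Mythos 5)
Your overall strategy is the same one the paper uses (it proves Theorem~\ref{thm:K} as the special case of Theorem~\ref{thm:KAlongProgressions} with trivial congruence conditions, via the Dani correspondence and a dynamical Borel--Cantelli argument), but the central step of your sketch has a genuine gap. Effective decay of matrix coefficients is a statement about correlations with respect to the Haar measure on $X=\mathrm{SL}_{m+n}(\bbr)/\mathrm{SL}_{m+n}(\bbz)$; it does not, by itself, give quasi-independence of the events $\{a_{t_k}\Lambda_\theta\in E_{t_k}\}$ as $\theta$ varies in a bounded open set, because the unipotent slice $\{u_\theta\}$ has measure zero in $X$ and Lebesgue measure on it is singular with respect to Haar measure. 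Likewise, ``transferring from lattice-space to the $\theta$-variable using the $u_\theta$-invariance of Lebesgue measure'' is not a mechanism: the whole difficulty of the Kleinbock--Margulis approach is exactly this transfer, and your proposal asserts it rather than proves it.

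For comparison, the paper first proves the shrinking-target Borel--Cantelli statement for Haar-almost every lattice (Corollary~\ref{cor:borelcantelli}), combining Schmidt's Borel--Cantelli lemma (Theorem~\ref{thm:cantelli}) with the spectral-gap estimate (Theorem~\ref{thm:mixing}) applied to the characteristic functions of the cusp sets $X_N^{\varepsilon}$; these sets are $\mathrm{SO}_{m+n}(\bbr)$-invariant, so the $K$-invariant form of effective mixing applies directly with $\|\chi\|_2=\mu(X_N^{\varepsilon})^{1/2}$, and no Siegel-set truncation of the targets is needed (your worry about ``mixing in the cusp'' is not where the difficulty lies). The passage to Lebesgue-almost every $\theta$ is then done by Fubini along the local product decomposition $\Omega_B\Omega_U$ near the identity, using that the block-lower-triangular group $B$ is weakly stable for $a(t)$, i.e.\ $a(t)\Omega_B a(t)^{-1}\subset\Omega_B$, together with uniform continuity of $\Delta$; this costs replacing the rate $r_t$ by $r_t-c$, which is harmless because the divergence criterion \eqref{eq:danid} of Lemma~\ref{lem:changeofvar} is insensitive to an additive constant (Remark~\ref{rem:quasiincreasing}(i)). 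Without this thickening/contraction argument (or an independently proved effective equidistribution theorem for the expanding translates $a_t u_\theta$ against the shrinking cusp targets, which would itself require substantial work), your outline does not close. A secondary point: Siegel's integration formula gives only the upper bound on the measure of your targets $E_t$; the matching lower bound needs an additional argument, e.g.\ the Iwasawa-coordinate computation invoked in Remark~\ref{rem:quasiincreasing}(ii) or a second-moment (Rogers-type) estimate.
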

Here and later in this note vectors are denoted by bold letters and if $\bx\in\R^k$ its coordinates are denoted by $x_1,\ldots,x_k$. Note that if $(m,n)\neq(1,1)$ then Theorem~\ref{thm:K} holds without the monotonicity assumption, see \cite{beresnevichVelani2010}. Our first theorem is a variation of Theorem~\ref{thm:K} in which solutions subject to coordinate-wise congruence conditions. Set $d=m+n$.

\begin{theorem}\label{thm:KAlongProgressions}
Let $\psi$ be a non-increasing approximation function, and fix $\bfN\in\N^d$ and $\bv\in\Z^d$. If $\psi$ satisfies~\eqref{eq:divergence} then for almost every $ \theta \in \mathrm{M}_{m,n}(\R)$ there are infinitely many $(\mathbf{p},\mathbf{q})\in\bZ^m\times \bZ^n$ satisfying \eqref{eq:psiApproximation} such that $p_j = v_j \Mod N_j$ for every $1\leq j\leq m$ and $q_j = v_{m+j} \Mod N_{m+j}$ for every $1\leq j\leq n$.
\end{theorem}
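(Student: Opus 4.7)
My plan is to extend the classical Dani correspondence to a finite congruence cover of the space of unimodular lattices, and then run the standard Khintchine--Groshev Borel--Cantelli argument on that cover. Set $L := \bigoplus_{j=1}^d N_j\Z \subset \Z^d$, so that the congruence conditions in the theorem are equivalent to $(\bp,\bq) \in \Lambda := \bv+L$. Let $N$ be a positive integer divisible by every $N_j$, and let $\Gamma := \Gamma(N) \leq \mathrm{SL}_d(\Z)$ be the principal congruence subgroup of level $N$. Since $\gamma-I \in N\mathrm{M}_d(\Z)$ for $\gamma\in\Gamma$ and $N\Z^d\subseteq L$, every $\gamma\in\Gamma$ preserves the affine set $\Lambda$, so $g\Lambda\subset\R^d$ is well-defined for each $g\Gamma\in X_\Gamma := \mathrm{SL}_d(\R)/\Gamma$. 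The cover $X_\Gamma\to\mathrm{SL}_d(\R)/\mathrm{SL}_d(\Z)$ is finite, and $X_\Gamma$ carries a Haar probability measure $\mu_\Gamma$.

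With the standard diagonal and unipotent
\[
a_t = \mathrm{diag}(e^{t/m}I_m,\, e^{-t/n}I_n), \qquad u_\theta = \begin{pmatrix} I_m & \theta \\ 0 & I_n \end{pmatrix},
\]
the Dani dictionary translates the existence of $(\bp,\bq)\in\Lambda$ satisfying \eqref{eq:psiApproximation} with $\|\bq\|^n$ in a dyadic block around $e^t$ into the condition $a_t u_\theta\Gamma\in E_t$, where
\[
E_t := \{g\Gamma\in X_\Gamma \sep g\Lambda\cap B_t\neq\emptyset\}
\]
and $B_t\subset\R^d$ is the box whose first $m$ sides have length $\asymp(e^t\psi(e^t))^{1/m}$ and whose last $n$ sides are of order one. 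A Siegel-type mean-value formula on $X_\Gamma$, available because $\Lambda$ is $\Gamma$-invariant, yields the same measure estimate $\mu_\Gamma(E_t)\asymp \min(1,\,e^t\psi(e^t))$ (up to constants depending on $\bfN$) as for the classical target in $\mathrm{SL}_d(\R)/\mathrm{SL}_d(\Z)$. The monotonicity of $\psi$ and dyadic summation then convert the divergence hypothesis \eqref{eq:divergence} into $\sum_n \mu_\Gamma(E_n) = \infty$.

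The remaining step is the classical Khintchine--Groshev quasi-independence argument, transplanted from $\mathrm{SL}_d(\R)/\mathrm{SL}_d(\Z)$ to $X_\Gamma$. Pairwise quasi-independence of the events $\{\theta\in U: u_\theta\Gamma\in a_n^{-1}E_n\}$, for a bounded box $U\subset\mathrm{M}_{m,n}(\R)$, follows from exponential mixing of the $a_t$-flow on $X_\Gamma$, which is inherited from the classical mixing statement since $X_\Gamma$ is a \emph{finite} cover. Combined with the divergence Borel--Cantelli lemma, this yields the theorem.

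The main obstacle is the extended Dani correspondence itself: one must pick $\Gamma$ so that the affine coset $\Lambda$ descends to $X_\Gamma$, set up the targets $E_t$ correctly, and verify the Siegel-type estimate for $\mu_\Gamma(E_t)$ on the cover with the correct constants. Once these pieces are in place, the dynamical and probabilistic ingredients are essentially those of the classical proof.
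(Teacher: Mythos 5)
Your overall strategy---encode the congruence condition on a finite congruence cover, run a Dani-type correspondence there, and conclude by a divergence Borel--Cantelli argument powered by exponential mixing---is the same as the paper's, and your bookkeeping via the principal congruence subgroup $\Gamma(N)$ acting on the grid $\Lambda=\bv+L$ is a legitimate variant of the paper's choice (the paper uses the smaller cover $X_N=G/\Gamma_N$ with $\Gamma_N$ the stabilizer of $\mathbf e_1 \Mod N$, and tracks the orbit $\Gamma_N\mathbf e_1$, i.e.\ primitive vectors in a fixed congruence class). The genuine gap is at the decisive transference step. You assert that pairwise quasi-independence of the events $\{\theta\in U:\ a_nu_\theta\Gamma\in E_n\}$ ``follows from exponential mixing of the $a_t$-flow on $X_\Gamma$.'' Mixing controls correlations with respect to the \emph{Haar} measure on $X_\Gamma$, whereas your events live on the unipotent slice $\{u_\theta:\theta\in U\}$, a Haar-null set; bounding $\int_U\chi_{E_n}(a_nu_\theta x_0)\,\chi_{E_{n'}}(a_{n'}u_\theta x_0)\,d\theta$ is an effective double-equidistribution statement for expanding translates of a horospherical piece against rough cusp-neighborhood targets, and it does not follow formally from decay of matrix coefficients. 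The paper does not attempt this: it proves the dynamical Borel--Cantelli statement for Haar-a.e.\ $x\in X_N$ (Theorem~\ref{thm:cantelli} plus the spectral-gap bound of Theorem~\ref{thm:mixing}, giving Corollary~\ref{cor:borelcantelli}), and only then transfers to Lebesgue-a.e.\ $\theta$ via a local product chart $\Omega_B\Omega_U$, Fubini, the fact that $a(t)$ does not expand $B$, and the robustness of the correspondence under replacing $r_t$ by $r_t-c$ (Remark~\ref{rem:quasiincreasing}(i)); moreover, since $X_N$ admits no global $BU\Gamma_N$ decomposition, this has to be done on countably many charts $\Omega_Gu_k$. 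Without some version of this thickening/Fubini argument (or a genuinely effective horospherical equidistribution input), your Borel--Cantelli step does not close.

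Two secondary points. First, the two-sided estimate $\mu_\Gamma(E_t)\asymp\min\left(1,e^t\psi(e^t)\right)$ does not follow from a Siegel-type first-moment formula alone: Markov's inequality yields only the upper bound, while the divergence argument needs the lower bound; the paper gets it from a direct computation in Iwasawa coordinates (Remark~\ref{rem:quasiincreasing}(ii)), and on your cover you would need the analogous cusp computation or a Rogers-type second moment for the grid $\Lambda$. Second, exponential mixing on $X_{\Gamma(N)}$ is indeed available for the fixed level $N$, but not merely because the cover is finite---new spectrum appears on the cover and one needs the spectral-gap input for congruence quotients that the paper quotes; this is easily fixable but should be cited correctly.
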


Diophantine approximation with congruence conditions was first studied by Hartman--Sz\"usz
\cite{hartmanszusz}. They proved a special case of  Theorem~\ref{thm:KAlongProgressions} where $m=n=1$ and the congruence condition on $\bp$ is trivial (i.e.~$N_2=1$) using the Duffin--Schaeffer Theorem.
Nontrivial congruence restrictions on $\bp$ were first considered by Harman \cite{harman} who proved Theorem~\ref{thm:KAlongProgressions} in the case $m=n=1$ using Harmonic Analysis. See also the more recent work of Adiceam \cite{adiceam2015progressions}.

Since a countable intersection of full measure sets has full measure,  it follows from Theorem~\ref{thm:KAlongProgressions} that almost every matrix has infinitely many approximations in every congruence class. Our next theorem reveals a new connection between approximations in several congruence classes for approximation functions of the form $\psi(n)=\frac{\eps}{n^\delta}$ where $\eps>0$ and $\delta\in(0,1]$.

\begin{theorem}\label{thm:A}
Assume $c>0$, $\delta<1$, $\ell\in\bbn$, and let $\bfN_i\in\N^d$ and $\bv_i\in\Z^d$ for every $1\leq i\leq \ell$. Then for almost every $ \theta \in \mathrm{M}_{m,n}(\R)$ there exist infinitely many $Q\in \bbn$ such that there exists a collection $\left\{(\mathbf{p}_i,\mathbf{q}_i)\right\}_{1\leq i\leq \ell}\subseteq\bZ^m\times \bZ^n$ that satisfy the following:
\begin{itemize}
\item $\|\bq_i\| \leq Q$ for every $1\leq i\leq \ell$.
\item $\|\theta\mathbf{q}_1+\mathbf{p}_1\|^m\leq c Q^{-n}$.
\item $\|\theta\mathbf{q}_i+\mathbf{p}_i\|^m\leq c Q^{- \delta n}$ for every $2\leq i\leq \ell$.
\item $p_{i,j} =v_{i,j}  \Mod N_{i,j}$ and $q_{i,r} =v_{i,m+r}  \Mod N_{i,m+r}$ for all $1\leq i \leq \ell$, $1\leq j \leq m$ and  $1\leq r \leq n$.
\end{itemize}
\end{theorem}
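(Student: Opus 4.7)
The plan is to reduce Theorem~\ref{thm:A} to a dynamical statement on a congruence cover of the space of unimodular lattices, and then extract the simultaneous solutions via a multiple ergodic theorem. Fix a common multiple $N$ of all entries $N_{i,j}$, work on $Y=\Gamma(N)\backslash\mathrm{SL}_d(\bbr)$, and set $a_t=\mathrm{diag}(e^{t/m}I_m,e^{-t/n}I_n)$ and $u_\theta=\begin{pmatrix}I_m & \theta\\ 0 & I_n\end{pmatrix}$. With $Q=e^{t/n}$, each of the four bulleted conditions on $(\bp_i,\bq_i)$ translates into the requirement that $a_tu_\theta(\bp_i,\bq_i)^{T}$ lie in a box $B_i(t)$: the box $B_1(t)=[-c^{1/m},c^{1/m}]^m\times[-1,1]^n$ is fixed, while for $i\geq 2$ the box $B_i(t)$ is highly anisotropic with volume proportional to $e^{(1-\delta)t}$, unbounded as $t\to\infty$. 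The congruence conditions confine $(\bp_i,\bq_i)$ to a fixed coset of a sublattice of $\bbz^d$.

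The extension of the Dani correspondence established earlier in the paper identifies the event ``the prescribed coset meets $B_i(t)$'' with the event $a_tu_\theta\in E_i(t)\subseteq Y$, where $E_i(t)$ is recorded by an appropriate congruence Siegel transform. For $i=1$ the target $E_1$ has fixed positive measure, and Theorem~\ref{thm:KAlongProgressions} applied with $\psi(n)=c/n$ (which satisfies \eqref{eq:divergence}) produces, for almost every $\theta$, an unbounded sequence of times $\{t_k\}$ at which $a_{t_k}u_\theta\in E_1$ in the correct class. For $i\geq 2$ the volume growth of $B_i(t)$ forces $\mu_Y(E_i(t))\to 1$ as $t\to\infty$, so any single $E_i(t)$ is a very likely event once $t$ is large.

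To merge these observations into the required simultaneous statement we invoke a multiple correlation estimate: for any bounded $V\subseteq\mathrm{M}_{m,n}(\bbr)$,
\begin{equation*}
\frac{1}{|V|}\int_V \prod_{i=1}^{\ell}\mathbf{1}_{E_i(t)}(a_tu_\theta)\dd\theta=\prod_{i=1}^{\ell}\mu_Y(E_i(t))+o(1),
\end{equation*}
which follows from effective mixing of the $a_t$-action on $Y$ together with $L^2$-control on the congruence Siegel transforms. Since $\mu_Y(E_i(t))\to 1$ for $i\geq 2$, the right-hand side is comparable to $\mu_Y(E_1)>0$, and a quantitative Borel--Cantelli argument along $\{t_k\}$ produces infinitely many $k$ at which every $E_i(t_k)$ is simultaneously hit; unwinding the dynamical dictionary supplies the required $(\bp_i,\bq_i)$.

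The principal obstacle is making the multiple correlation estimate quantitative enough to be decisive. The boxes $B_i(t)$ for $i\geq 2$ expand in the $\bp$-directions, so their Siegel transforms are unbounded and must be truncated near the cusp of $Y$; one also needs the mixing rate on $Y$ to be uniform across the tower of congruence covers, i.e.\ a Selberg-type spectral gap. The assumption $\delta<1$ enters crucially here: it ensures $\mu_Y(E_i(t))\to 1$ at a geometric rate in $t$ that dominates the correlation decay provided by mixing, allowing the Borel--Cantelli step to go through along the sparse return sequence $\{t_k\}$ furnished by Theorem~\ref{thm:KAlongProgressions}.
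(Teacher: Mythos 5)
Your reduction to dynamics and your single‑time estimate do not add up to the statement of Theorem~\ref{thm:A}, and the gap is structural. Since every factor in your ``multiple correlation estimate'' is evaluated at the same orbit point $a_tu_\theta$, the product of indicators is simply the indicator of $\bigcap_{i}E_i(t)$, so your estimate is a single‑target equidistribution statement: it says that for each fixed large $t$ the set $A_t=\{\theta\in V : a_tu_\theta\in\bigcap_i E_i(t)\}$ has measure comparable to $\mu_Y(E_1)\,|V|$ (and indeed, once $\mu_Y(E_i(t))\to1$ for $i\ge2$, the lower bound $\mu_Y\bigl(\bigcap_iE_i(t)\bigr)\ge\mu_Y(E_1)-\sum_{i\ge2}\bigl(1-\mu_Y(E_i(t))\bigr)$ needs no mixing at all). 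But Theorem~\ref{thm:A} requires, for almost every \emph{fixed} $\theta$, infinitely many good times. Passing from ``for every large $t$, most $\theta$ are good'' to ``for a.e.\ $\theta$, infinitely many $t$ are good'' needs quasi‑independence of the sets $A_t$ across pairs of times (a second‑moment bound of the type \eqref{eq:assumptioncantelli}), which you never state or prove. Worse, the sequence $\{t_k\}$ along which you propose to run Borel--Cantelli is produced by Theorem~\ref{thm:KAlongProgressions} and therefore depends on $\theta$, while your correlation estimate is obtained by integrating over $\theta$ at a fixed time; you cannot average over the very variable that determines the times you are conditioning on. Two further points would also need work: the effective mixing available in the paper (Theorem~\ref{thm:mixing}) applies only to $\SOR[d]$-invariant functions, whereas your anisotropic, congruence‑constrained targets $E_i(t)$ are not $K$-invariant; and the claim $\mu_Y(E_i(t))\to1$ requires a Siegel/Rogers‑type moment computation for the congruence Siegel transform that is asserted rather than proved.

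The paper's proof is entirely different and much shorter: Theorem~\ref{thm:A} is a direct specialization of Theorem~\ref{thm:B}. Because $\delta<1$ one may choose $\kappa_1=1$ and distinct $\kappa_2,\ldots,\kappa_\ell\in(\delta,1)$, so that \eqref{eq:unnecessaryAssumption} holds; the pointwise multiple ergodic theorem (Theorem~\ref{thm;main}) then gives, for a.e.\ $\theta$, a sequence of times $t_k$ at which \emph{all} the orbits $a_{\balpha_i,\bbeta_i}(\kappa_it_k)u(\theta)\gamma_i\Gamma_N$ lie in the fixed open set $X_N^{\eps}$, where the congruence classes are encoded by base points $\gamma_i$ with $\gamma_i\mathbf e_1=\bv_i\Mod N$; Corollary~\ref{cor;main} converts this back into the inequalities with $Q=e^{t_k}$, and $\kappa_i>\delta$ yields the exponent $\delta n$ for $i\ge2$. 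In other words, the hypothesis $\delta<1$ enters through the freedom to separate the speeds $\kappa_i$ (so that one fixed positive‑measure target is hit simultaneously along several differently sped flows), not through target measures tending to one. To salvage your route you would have to prove a dynamical Borel--Cantelli theorem for the time‑dependent, non‑$K$-invariant targets $\bigcap_iE_i(t)$ with cross‑time correlation bounds for a fixed (non‑$\theta$-dependent) sequence of times --- considerably more work than invoking Theorem~\ref{thm:B}.
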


The proofs of Theorem~\ref{thm:KAlongProgressions} and Theorem~\ref{thm:A} use a translation of Diophantine conditions on a matrix into dynamical properties of  a certain trajectory in a homogeneous space. This method, initiated by Dani \cite{dani}, was explored further by Kleinbock--Margulis \cite{kleinbockmargulis2} who gave a dynamical proof of Theorem~\ref{thm:K}. We develop this method further to study Diophantine approximations with congruence conditions.

A crucial ingredient for the proof of Theorem~\ref{thm:A} is the pointwise multiple ergodic theorem proved in \cite{shi2015expanding}.
The generality dealt with in \cite{shi2015expanding} allows us
to obtain a similar result for Diophantine approximation with weights. 
A \emph{$k$-dimensional weight} is a probability vector in $\bR^k$ with positive entries. If $\balpha\in\bbr^k$ is a weight denote the \emph{quasi-norm associated with $\balpha$} by $\|\cdot\|_{\balpha}$ and define it by
\[
\|\mathbf{x}\|_{\balpha}=\max \left\{|x_j|^\frac{1}{\alpha_j} \sep 1\leq j \leq k
\right
\}
\]
for every $\bx\in\bbr^k$. Diophantine approximation with weights from the homogeneous dynamics viewpoint were considered in \cite{kleinbock1998flows}, and studied further e.g.\ in
\cite{kleinbock2017pointwise} and \cite{kleinbock2008dirichlet}.
Theorem~\ref{thm:A} can now be refined as follows:
\begin{theorem}\label{thm:B}
Assume $\eps>0$, $\ell\in\bbn$, and $\bfN_i\in\N^d$, $\bv_i\in\Z^d$ for every $1\leq i \leq \ell$. Let $\balpha_i\in \bR^m$ and $\bbeta_i\in \bR^n$ be weights for every $1\leq i \leq \ell$. Assume that $\kappa_1,\ldots,\kappa_\ell$ are positive  real numbers that satisfy
\begin{equation}\label{eq:unnecessaryAssumption}
\kappa_i\left(\balpha_i,\bbeta_i\right) - \kappa_{i-1}\left(\balpha_{i-1},\bbeta_{i-1}\right) > 0 \text{ for all } 2\leq i\leq \ell.
\end{equation}
Then for almost every $\theta\in \mathrm{M}_{m,n}(\R)$ there exist arbitrarily large $Q\in \R$ such that for every $1\leq i\leq \ell$ there exists $\left(\bp_i,\bq_i\right)\in\bZ^m\times \bZ^n$ that satisfy:
\begin{itemize}
\item $\|\theta\mathbf{q}_i+\mathbf{p}_i\|_{\balpha_i}\leq \eps Q^{- \kappa_i}$ and $\|\bq_i\|_{\bbeta_i} \leq \eps Q^{\kappa_i}$.
\item $p_{i,j} =v_{i,j}  \Mod N_{i,j}$ for every $1\leq j \leq m$ and $q_{i,j} =v_{i,m+j}  \Mod N_{i,m+j}$ for every $1\leq j \leq n$.
\end{itemize}
\end{theorem}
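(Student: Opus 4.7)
My plan is to combine an extension of the Dani correspondence to a congruence cover of the space of lattices with the pointwise multiple ergodic theorem of \cite{shi2015expanding}. Let $N$ be a common multiple of all entries of $\bfN_1, \ldots, \bfN_\ell$, let $\Gamma_N \subset \mathrm{SL}_d(\bZ)$ denote the principal congruence subgroup of level $N$, and set $X_N = \mathrm{SL}_d(\R)/\Gamma_N$. Points of $X_N$ may be thought of as unimodular lattices in $\R^d$ together with a labelling of their reduction modulo $N$. For each $1 \leq i \leq \ell$ let
\[
a^{(i)}_t = \mathrm{diag}\bigl(e^{\alpha_{i,1} t}, \ldots, e^{\alpha_{i,m} t}, e^{-\beta_{i,1} t}, \ldots, e^{-\beta_{i,n} t}\bigr)
\]
be the weighted diagonal flow, let $u_\theta = \begin{pmatrix} I_m & \theta \\ 0 & I_n \end{pmatrix}$, and let $x_0 \in X_N$ be the identity coset. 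The first step is a congruence weighted Dani correspondence: for a suitably chosen open set $U_i \subset X_N$ of positive Haar measure---namely, the set of decorated lattices that contain a non-zero vector of appropriate weighted quasi-norm at most a fixed power of $\eps$ whose reduction modulo $N$ lies in the class determined by $(\bv_i, \bfN_i)$---one has $a^{(i)}_{\kappa_i \log Q}\, u_\theta\, x_0 \in U_i$ if and only if there exists $(\bp_i, \bq_i) \in \bZ^m \times \bZ^n$ satisfying both the weighted inequalities of the theorem at parameter $Q$ and the congruence condition with respect to $\bv_i \Mod \bfN_i$. This is a direct adaptation of the construction in \cite{kleinbockmargulis2}, carried out on the finite cover $X_N$.

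Assumption \eqref{eq:unnecessaryAssumption} asserts the coordinate-wise strict inequality $\kappa_i (\balpha_i, \bbeta_i) > \kappa_{i-1} (\balpha_{i-1}, \bbeta_{i-1})$, which means every expansion/contraction rate of $a^{(i)}_{\kappa_i t}$ strictly dominates the corresponding rate of $a^{(i-1)}_{\kappa_{i-1} t}$. This is precisely the ordering hypothesis on commuting expanding diagonal flows required by the pointwise multiple ergodic theorem of \cite{shi2015expanding}. Applied to the $\ell$-tuple $\bigl(a^{(1)}_{\kappa_1 t}, \ldots, a^{(\ell)}_{\kappa_\ell t}\bigr)$ on $X_N^\ell$ together with the unipotent parametrization $\theta \mapsto (u_\theta\, x_0, \ldots, u_\theta\, x_0)$, that theorem yields, for Lebesgue-almost every $\theta$, equidistribution of the multiple orbit in $X_N^\ell$ along a sequence $t \to \infty$ with respect to the product Haar measure. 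Consequently the orbit visits $U_1 \times \cdots \times U_\ell$ for arbitrarily large $t$; setting $Q = e^t$ and translating back via the correspondence of the first paragraph produces the desired simultaneous approximations.

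The main obstacle will be verifying the hypotheses of Shi's multiple ergodic theorem in this setting. One must establish ergodicity (in fact mixing) of each weighted diagonal flow on the congruence cover $X_N$, which follows from strong approximation since $\mathrm{SL}_d$ is simply connected and simple. One must also check that $\theta \mapsto u_\theta\, x_0$ is an admissible expanding parametrization for the diagonal $\ell$-fold flow with distinct weights and speeds; this relies on the fact that the upper-right block lies in the joint expanding horospherical subgroup of every $a^{(i)}_{\kappa_i t}$ for $t > 0$. A secondary technical point is the verification that each $U_i$ has positive Haar measure, which reduces to an elementary volume computation for the set of shifted lattices carrying a short vector in a prescribed coset of $\bfN_i\bZ^d$.
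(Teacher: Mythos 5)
Your proposal is correct and rests on the same two pillars as the paper's proof: a Dani-type correspondence on a congruence cover of $\SLR[d]/\SLZ[d]$ and the pointwise multiple ergodic theorem of \cite{shi2015expanding}, with condition \eqref{eq:unnecessaryAssumption} playing exactly the role you assign to it. The difference lies in how the congruence class is threaded through the argument. The paper fixes a single target set $X_N^{\eps}$, defined via the Iwasawa decomposition so that openness and nonemptiness are immediate, and encodes the class $\bv_i \Mod N$ in the base point: after reducing to $\gcd(\bv_i,N)=1$ it chooses $\gamma_i\in\Gamma$ with $\gamma_i\mathbf e_1=\bv_i \Mod N$, uses $\gamma_i\Gamma_N\mathbf e_1=\mathcal P_N(\gamma_i\mathbf e_1)$, invokes the ergodic theorem (Theorem~\ref{thm;main}) with the translates $g_i=\gamma_i$, and converts membership in $X_N^{\eps}$ into a short vector in the prescribed class via Lemma~\ref{lem;criterion}. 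You instead keep one base point (the identity coset of the principal congruence cover) and encode the class in the target sets $U_i$ of decorated lattices containing a short nonzero vector with residue $\bv_i \Mod N$; this uses only the $g_i=\mathrm{Id}$ case of the ergodic theorem, makes the correspondence hold essentially by definition, and dispenses with the primitivity/gcd reduction and the choice of $\gamma_i$ (positivity of $\mu(U_i)$ is immediate since $U_i$ is open and nonempty, so no volume computation is needed). The remaining work is identical in both versions: the hypotheses of Theorem A.1 of \cite{shi2015expanding}, which you flag as the main obstacle, are exactly what the paper's Theorem~\ref{thm;main} packages, and your checklist --- mixing of the weighted diagonal flows on the congruence quotient, $u(\theta)$ lying in the expanding horospherical subgroup of each $a_{\balpha_i,\bbeta_i}(\kappa_i t)$, and positivity of the targets --- is the right one and presents no obstruction.
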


\begin{proof}[Proof of Theorem~\ref{thm:A}]
Apply Theorem~\ref{thm:B} with
\[
\left(\balpha_i,\bbeta_i\right)=\left(1/m,\ldots,1/m,1/n,\ldots,1/n\right)
\]
for every $1\leq i\leq \ell$, $\eps=\frac{c}{2^n}$, $\kappa_1=1$ and any distinct $\kappa_2,\ldots,\kappa_\ell\in(\delta,1)$, and take $\lceil Q\rceil\in\bbn$ for $Q>1$ as in the conclusion of Theorem~\ref{thm:B}. 
\end{proof}

\begin{remark}\leavevmode

\item(i) It follows from a general zero-one law \cite{beresnevichVelaniZeroOneLaws} that if Theorem~\ref{thm:B} holds for $\eps=1$ then it holds for every $\eps>0$. However, during the course of its proof in \S~\ref{sec:proof} it will be evident that it is more natural to state it in this form.

\item(ii) The conclusion of the theorem does not in general hold when all weights are equal and \eqref{eq:unnecessaryAssumption} is removed: Indeed, for $m=n=1$, any $\theta\in\bbr$ and any positive $Q\in\bbr$, all the solutions $(p,q)\in\bbz^2$ to $|\theta q+p|<Q^{-1}$ and $|q|<Q$ are integer dilations of a single solution. With more effort it is possible that our method could be applicable to certain shrinking targets, replacing $\eps$ with a function of $Q$ that decays slower then an exponential function.

\item(iii) In the case that all the weights are distinct it remains an interesting problem to decide whether or not \eqref{eq:unnecessaryAssumption} is a necessary condition.

\end{remark}

\section{Dani correspondence with congruence condition}

In this section Dani correspondence is refined to encode a congruence condition.
Let $N$ be a positive integer. Let $G=\SLR[d]$ be the group of real $d\times d$-matrices of determinant one
and $\Gamma=\SLZ[d]$ be the subgroup of all integer matrices in $G$.
It is well-known that the homogeneous space $X=G/\Gamma $ can be naturally identified with the set of unimodular lattices in $\mathbb R^{d}$ through the map  $g\Gamma\in X\mapsto g\bZ^{d}$.

Let $\balpha\in\bbr^m$ and $\bbeta\in\bbr^n$ be weights, and let $\|\cdot \|_{\balpha, \bbeta}$ be the weighted quasi-norm on $\mathbb R^d$
defined by
$$\|\mathbf v \|_{\balpha, \bbeta} =\max \left(\|\mathbf x \|_{\balpha}, \|\mathbf y  \|_{\bbeta}\right),$$
where $\mathbf v =(\mathbf x, \mathbf y) $ and $ \mathbf x\in \mathbb R^m, \mathbf y\in \mathbb R^n$.
For any $t\in\bbr$ let
\[
a_{\balpha,\bbeta}(t)=\diag{e^{t\alpha_1},\dots, e^{t\alpha_m}, e^{-t\beta_1},\dots, e^{-t\beta_n }}.
\]
For each $\theta \in\Mat_{m,n}(\R)$ 
associate a matrix
\begin{equation}\label{eq:uA}
u(\theta) =\begin{bmatrix}
\Id_m& \theta  \\
0 & \Id_n
\end{bmatrix}
\end{equation}
in $G$. 

Recall that a vector $\mathbf v\in \mathbb Z^{d}$ is said to be \emph{primitive} if it is nonzero and the greatest common divisor among its coefficients is one. 
Let $\widehat {\mathbb Z^{d}}$ denote the set of primitive vectors of $\mathbb Z^{d}$.
For every  $\mathbf v\in \widehat {\mathbb Z^{d}}$
  let
$$\mathcal P_N(\mathbf v)=\left\{\mathbf w\in \widehat {\mathbb Z^{d}}\;:\;\mathbf w=\mathbf v\Mod N  \right\}.$$

It is not hard to see that  $\widehat {\mathbb Z^{d}}$ is a disjoint union of $\mathcal P_N(\mathbf v)$ for $  \mathbf v$ taken over a set of representatives of $\widehat {\mathbb Z^d} \Mod N$.

Let
\[
\Gamma_N=\left\{\gamma\in \Gamma\; :\; \gamma \mathbf e _1 =\mathbf e _1 \Mod N\right\},
\]
where $\mathbf e_1=(1, 0,\ldots, 0)\in\bbz^d$, and set $X_N=G/\Gamma_N$. Note that $\Gamma_N$ is a finite index subgroup of $\Gamma$. 
The group $\Gamma$ acts transitively on $\widehat {\mathbb Z^d}$
and the coset decomposition of $\Gamma/ \Gamma_N$ corresponds to
the congruence decomposition of $\widehat {\mathbb Z^d}$, namely
\[
\gamma \Gamma_N \mathbf e_1=\mathcal P_N \left(\gamma \mathbf e_1\right) \text{ for all } \gamma \in \Gamma .  
\]

Let $K=\SOR[d]$ be the subgroup of all orthogonal matrices in $G$, let $U$ be the upper triangular unipotent subgroup, and for any $\varepsilon>0$ let
\[
A_\varepsilon=\left\{\diag {s_1, \ldots, s_d}\in G\;:\;  s_1 < \varepsilon,\; s_j>0 \text{ for all } 1\le j\le d  \right\}.
\]
The Iwasawa decomposition of the group $G$ implies that $KA_\varepsilon U$ is a nonempty open subset of $G$. Therefore, 
 the set defined by
\[
X_{N}^{\varepsilon}= \{g\Gamma_N: g\in KA_\varepsilon U\Gamma_N   \}
\]
is a nonempty open subset of $X_N$.

\begin{lemma}\label{lem;criterion}
Suppose $0<\varepsilon<1$ and $\gamma\in \Gamma$.
If $g\gamma\Gamma_N \in X_N^{\varepsilon}$, then there exists $\mathbf v\in
\mathcal P_N(\gamma \mathbf  e_1) $ such that $\|g \mathbf v \|_{\balpha, \bbeta}< \varepsilon $.
\end{lemma}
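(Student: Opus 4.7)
The plan is to unwind the definitions of $X_N^\varepsilon$ and $\mathcal P_N$ and to extract an explicit candidate for $\mathbf v$. First I would rewrite the hypothesis $g\gamma\Gamma_N\in X_N^\varepsilon$ as $g\gamma=kau\gamma_1$ for some $k\in K$, $a=\diag{s_1,\ldots,s_d}\in A_\varepsilon$ (so in particular $s_1<\varepsilon$), $u\in U$ and $\gamma_1\in\Gamma_N$. Rearranging gives $g\gamma'=kau$ with $\gamma':=\gamma\gamma_1^{-1}\in\Gamma$, and the natural candidate is $\mathbf v:=\gamma'\mathbf e_1$, which is automatically primitive since $\gamma'\in\Gamma$.

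Next I would verify that $\mathbf v$ lies in the correct congruence class. Because $\Gamma_N$ is a subgroup of $\Gamma$, the inverse $\gamma_1^{-1}$ also lies in $\Gamma_N$, so $\gamma_1^{-1}\mathbf e_1\equiv\mathbf e_1\Mod N$, and therefore $\mathbf v=\gamma\gamma_1^{-1}\mathbf e_1\equiv\gamma\mathbf e_1\Mod N$, i.e.\ $\mathbf v\in\mathcal P_N(\gamma\mathbf e_1)$. The only mild thing to check here is that $\Gamma_N$ is closed under inverses, which follows by writing $\gamma_1\mathbf e_1-\mathbf e_1\in N\mathbb Z^d$ and multiplying both sides by $\gamma_1^{-1}\in\Gamma$.

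It then remains to bound $\|g\mathbf v\|_{\balpha,\bbeta}$. Since every element of $U$ fixes $\mathbf e_1$ (upper triangular unipotent) and $a\mathbf e_1=s_1\mathbf e_1$, I compute $g\mathbf v=kau\mathbf e_1=s_1\,k\mathbf e_1$. Orthogonality of $k\in K$ forces $k\mathbf e_1$ to have Euclidean norm one, so every coordinate of $g\mathbf v$ has absolute value at most $s_1<\varepsilon<1$.

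The only point that needs real attention is the passage from this coordinatewise bound to the weighted quasi-norm bound. Since $\balpha$ and $\bbeta$ are probability vectors with positive entries, every exponent $1/\alpha_j$ and $1/\beta_j$ is at least $1$; combined with $s_1<\varepsilon<1$, this gives $|s_1(k\mathbf e_1)_j|^{1/\alpha_j}\leq|s_1(k\mathbf e_1)_j|\leq s_1<\varepsilon$ and similarly for the $\bbeta$-block, whence $\|g\mathbf v\|_{\balpha,\bbeta}<\varepsilon$. This is the only step where the assumption $\varepsilon<1$ is used, and it is the detail I would be most careful about.
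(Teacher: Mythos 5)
Your proof is correct and follows essentially the same route as the paper: extract $k\in K$, $a\in A_\varepsilon$, $u\in U$ and an element of $\Gamma_N$ from the hypothesis, take $\mathbf v$ to be the image of $\mathbf e_1$, compute $g\mathbf v=s_1k\mathbf e_1$, and pass from the Euclidean bound to the quasi-norm bound using $1/\alpha_j,1/\beta_j\geq 1$ and $\varepsilon<1$. The only cosmetic difference is that you write the $\Gamma_N$-element as $\gamma_1^{-1}$ and check closure under inverses explicitly, which the paper leaves implicit.
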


\begin{proof}
Since $g\gamma\Gamma_N \in X_N^{\varepsilon}$, there exists $\tilde{\gamma}\in \Gamma_N$ such that
$g\gamma\tilde{\gamma}\in KA_\varepsilon U$.
Let $\mathbf v=\gamma\tilde{\gamma}\mathbf e_1 \in \mathcal P_N(\gamma \mathbf  e_1)$.
Then
\[
\|g\mathbf v\|_{\balpha,\bbeta}= \|g\gamma\tilde{\gamma}\mathbf e_1\|_{\balpha, \bbeta} = \|k a u \mathbf e_1\|_{\balpha, \bbeta},
\]
where $k, a, u$ belong to $K, A_\varepsilon, U$, respectively.
Suppose $a=\mathrm{diag}(s_1, \ldots, s_d)$, then
  $kau \mathbf e_1=s_1k \mathbf e_1$ so its Euclidean norm
is less than $\varepsilon$ according to the definition of $A_\varepsilon$. Hence, $\|g\mathbf v\| < \eps$. Since $\balpha$ and $\bbeta$ are weights, then $1/\alpha_j$ and $1/\beta_j$ are greater than one for all $1\leq j\leq m$ and $1\leq j\leq n$,
respectively. Therefore, $\|g\mathbf v\|_{\balpha,\bbeta} < \varepsilon$.
\end{proof}

\begin{corollary}\label{cor;main}
Let $\gamma \in \Gamma, \theta\in \mathrm{M}_{m, n}(\mathbb R), t>0$
and $0<\varepsilon<1$. If $a_{\balpha,\bbeta}(t)u(\theta) \gamma\Gamma_N\in X_N^{\varepsilon}$, then there exists $(\mathbf p, \mathbf q)\in \mathcal P_N(\gamma\mathbf e_1)$ where
$\mathbf p \in \mathbb Z^m$ and $  \mathbf q \in \mathbb Z^n$ such that
\begin{equation}\label{eq;diop}
\| \theta  \mathbf{q}+\mathbf{p}\|_{\balpha} < \varepsilon e^{-t} \quad
\mbox{and}	\quad
\|\mathbf q \|_{\bbeta} < \varepsilon e^t.
\end{equation}
\end{corollary}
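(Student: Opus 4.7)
The plan is to deduce Corollary~\ref{cor;main} directly from Lemma~\ref{lem;criterion} by unpacking the definitions of $u(\theta)$, $a_{\balpha,\bbeta}(t)$, and the weighted quasi-norm. Since $a_{\balpha,\bbeta}(t)u(\theta)\gamma\Gamma_N\in X_N^{\varepsilon}$, I would apply Lemma~\ref{lem;criterion} with $g=a_{\balpha,\bbeta}(t)u(\theta)$ to obtain a primitive vector $\mathbf v\in\mathcal P_N(\gamma\mathbf e_1)$ satisfying $\|g\mathbf v\|_{\balpha,\bbeta}<\varepsilon$.

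Next I would write $\mathbf v=(\mathbf p,\mathbf q)$ with $\mathbf p\in\bZ^m$ and $\mathbf q\in\bZ^n$. From the block form of $u(\theta)$ in \eqref{eq:uA} we get $u(\theta)\mathbf v=(\theta\mathbf q+\mathbf p,\mathbf q)$, so
\[
a_{\balpha,\bbeta}(t)u(\theta)\mathbf v
=\bigl(e^{t\alpha_1}(\theta\mathbf q+\mathbf p)_1,\ldots,e^{t\alpha_m}(\theta\mathbf q+\mathbf p)_m,\,
e^{-t\beta_1}q_1,\ldots,e^{-t\beta_n}q_n\bigr).
\]

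The key point, which is really the only computation in the proof, is that scaling the $j$-th coordinate of a vector by $e^{t\alpha_j}$ multiplies the $\balpha$-quasi-norm by exactly $e^t$, because $(e^{t\alpha_j}|x_j|)^{1/\alpha_j}=e^t|x_j|^{1/\alpha_j}$ is independent of $j$; the analogous statement holds for $\bbeta$ with factor $e^{-t}$. Therefore
\[
\|g\mathbf v\|_{\balpha,\bbeta}
=\max\bigl(e^{t}\|\theta\mathbf q+\mathbf p\|_{\balpha},\; e^{-t}\|\mathbf q\|_{\bbeta}\bigr),
\]
and the inequality $\|g\mathbf v\|_{\balpha,\bbeta}<\varepsilon$ yields both bounds in \eqref{eq;diop} simultaneously.

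There is no real obstacle here; the argument is essentially bookkeeping, and the only subtlety is verifying that the diagonal action of $a_{\balpha,\bbeta}(t)$ scales the two weighted quasi-norms by the uniform factors $e^t$ and $e^{-t}$, which is precisely why the eigenvalues were chosen to be $e^{t\alpha_j}$ and $e^{-t\beta_j}$.
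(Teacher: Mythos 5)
Your proof is correct and is essentially the paper's own argument: the paper likewise applies Lemma~\ref{lem;criterion} with $g=a_{\balpha,\bbeta}(t)u(\theta)$ to get $\mathbf v=(\mathbf p,\mathbf q)\in\mathcal P_N(\gamma\mathbf e_1)$ with $\|g\mathbf v\|_{\balpha,\bbeta}<\varepsilon$, and then invokes ``a direct calculation'' for the equivalence with \eqref{eq;diop}. You have simply written out that calculation, correctly noting that the choice of eigenvalues $e^{t\alpha_j}$, $e^{-t\beta_j}$ makes the two weighted quasi-norms scale by exactly $e^{t}$ and $e^{-t}$.
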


\begin{proof}
According to Lemma~\ref{lem;criterion}, there exists $\mathbf v = (\mathbf p, \mathbf q)\in \mathcal P_N(\gamma\mathbf e_1)$ such that 	
\begin{equation}\label{eq;temp}
\| a_{\balpha,\bbeta}(t)u( \theta )\gamma\mathbf v \|_ {\balpha, \bbeta}<\varepsilon.
\end{equation}
A direct calculation shows that (\ref{eq;temp}) is equivalent to
(\ref{eq;diop}).
\end{proof}

\section{A proof of Theorem~\ref{thm:B}}\label{sec:proof}

The proof is an application of the following special case of Theorem A.1 from \cite{shi2015expanding}.

\begin{theorem}\label{thm;main}
Let $\ell\in\bbn$, let $\balpha_i\in \bR^m$ and $\bbeta_i\in \bR^n$ be weights for every $1\leq i \leq \ell$ and assume that $\kappa_1,\ldots,\kappa_\ell$ are real numbers that satisfy \eqref{eq:unnecessaryAssumption}. Let $N\in\bbn$ and let $\mu$ denote the probability Haar measure on $X_N$. Then for any $\varphi_1,\ldots,\varphi_\ell\in C_c^\infty\left(X_N\right)$ and $g_1,\dots,g_\ell\in G$, one has for almost every $\theta\in \mathrm{M}_{m, n}(\mathbb R)$
\begin{equation}\label{eq;main}
\lim_{T\to \infty}	\frac{1}{T}\int_0^T \prod_{i=1}^\ell \varphi_i\left(a_{\balpha_i,\bbeta_i}\left(\kappa_i t \right)u(\theta)g_i\Gamma_N\right)\dd t =\prod_{i=1}^\ell\mu\left(\varphi_i\right).
\end{equation}
\end{theorem}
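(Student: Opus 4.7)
The plan is to deduce this from Theorem~A.1 of \cite{shi2015expanding}, so the task is mainly to verify that its hypotheses apply to the present setting. The space $X_N = G/\Gamma_N$ is of finite volume since $\Gamma_N$ has finite index in $\Gamma$, and the full unipotent subgroup $U = \{u(\theta) : \theta \in \mathrm{M}_{m,n}(\R)\}$ is the expanding horospherical subgroup of each $a_{\balpha_i,\bbeta_i}(t)$ for $t>0$: a direct computation shows that conjugation dilates the $(p,q)$-entry of $\theta$ by $e^{t(\alpha_{i,p}+\beta_{i,q})}$. Read coordinate-wise, the hypothesis \eqref{eq:unnecessaryAssumption} says that $\kappa_i\alpha_{i,p} > \kappa_{i-1}\alpha_{i-1,p}$ and $\kappa_i\beta_{i,q} > \kappa_{i-1}\beta_{i-1,q}$ for every $p,q$, which is exactly the strict ordering of the successive expansion rates that Shi's theorem requires.

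The strategy I would follow, which is that of \cite{shi2015expanding}, is a quantitative second-moment argument in $\theta$ combined with the Borel--Cantelli lemma along a discrete subsequence of times. After localizing $\theta$ to a fixed bounded set and smoothing the $\varphi_i$, one expands the square
\[
\int \left|\frac{1}{T}\int_0^T \prod_{i=1}^\ell \varphi_i\left(a_{\balpha_i,\bbeta_i}(\kappa_i t)u(\theta)g_i\Gamma_N\right)\dd t \;-\; \prod_{i=1}^\ell \mu(\varphi_i)\right|^2 \dd \theta
\]
as a double integral over $(t_1,t_2)\in[0,T]^2$ of an integral over $\theta$. The inner $\theta$-integral is treated by the change of variables $a u(\theta) = u(a\theta a^{-1})a$, which turns translation by $\theta$ into translation along the horospherical direction, and then by quantitative mixing of the diagonal flow on $X_N$. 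The coordinate-wise growth guaranteed by \eqref{eq:unnecessaryAssumption} produces exponential decay of correlations in $|t_1-t_2|$ simultaneously in all $\ell$ factors. Integrating yields an $O(1/T)$ bound on the variance, so Chebyshev's inequality and Borel--Cantelli along $T_k=k^2$ give pointwise convergence on that subsequence; interpolation to arbitrary $T$ is immediate from the Lipschitz bound $|A_T - A_{T_k}| \ll (T-T_k)/T_k$ on the ergodic average.

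The main obstacle is the effective mixing estimate on $X_N$: one needs exponential decay of multi-time correlations that remains effective when several diagonal flows act at different rates, and this is exactly where the strict monotonicity of the scaled weights is used. Once such mixing is available, the variance/Borel--Cantelli scheme is formal. Since \cite{shi2015expanding} establishes the general abstract version, the theorem at hand follows by specializing the data to the one-parameter subgroups $\{a_{\balpha_i,\bbeta_i}(\kappa_i t)\}_{t\ge 0}$, the unipotent $U = u(\mathrm{M}_{m,n}(\R))$, and the lattice $\Gamma_N \subset \Gamma$.
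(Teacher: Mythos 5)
Your proposal matches the paper's approach: the paper simply states this theorem as a special case of Theorem~A.1 of \cite{shi2015expanding}, which is exactly the reduction you carry out, and your verification that the hypotheses hold (finite covolume of $\Gamma_N$, $u(\mathrm{M}_{m,n}(\R))$ expanded horospherically with the coordinatewise rate ordering coming from \eqref{eq:unnecessaryAssumption}) is correct. The additional sketch of the internal second-moment/Borel--Cantelli mechanism is not needed here, since the paper takes Theorem~A.1 as a black box.
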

\begin{proof}[Proof of Theorem~\ref{thm:B}]

Without loss of generality assume that there exists $N$ such that $N=N_{i,j}$ for all $1\leq i \leq \ell$, $1\leq j\leq m$. Otherwise, replace each $N_{i,j}$ with $N=\operatorname{lcm}\left(N_{1,1},\ldots,N_{\ell,d}\right)$.

Similarly, without loss of generality assume that $\gcd \left(\bv_i,N\right)=1$ for every $1 \leq i \leq \ell$. Otherwise, replace $\bv_i$ with $\frac{\bv_i}{\gcd\left(\bv_i,N\right)}$ for every $1 \leq i \leq \ell$ and replace $\eps$ with $\frac{\eps}{N^r}$, where 
\[
r=\max_{i,j} \left\{1/\alpha_{i,j}, 1/\beta_{i,j} \right\}.
\]

For every $1\leq i\leq \ell$ choose $\gamma_i\in \Gamma$ such that
\[
\gamma_i \mathbf e_1=\bv_i \Mod N.
\]
Recall that the set $X_N^{\varepsilon}$ is a nonempty open subset of $X_N$. Therefore, there exists a compactly supported smooth function $\varphi: X_N\to [0, \infty)$ such that $\mu(\varphi)>0$ and the  support of $\varphi$ is contained in $ X_N^{ \varepsilon}$.

Now suppose that $\theta \in \mathrm{M}_{m, n}(\mathbb R) $ satisfies \eqref{eq;main}. Then there exists a strictly increasing sequence of integers $\left\{ t_k \right\}_{k=1}^\infty$ such that
$$
\varphi\left(a_{\balpha_i,\bbeta_i}\left(\kappa_it_k\right)u(\theta) \gamma_i\Gamma_N\right)>0
$$
for all $1\le  i\le \ell$ and any $k\in\bbn$.
Since the support of $\varphi_i$ is contained in $X_N^{\varepsilon}$, this implies that
\[
a_{\balpha_i,\bbeta_i}\left(\kappa_it_k\right)u(\theta)\gamma_i\Gamma_N\in X_N^{\varepsilon}.
\]
for all $1\le  i\le \ell$ and any $k\in\bbn$. Hence, by Corollary~\ref{cor;main}, for every  $k$ and every $1\le i \le \ell$ there exists
$(\mathbf p_i, \mathbf q_i)\in \mathcal P_N\left(\gamma_i \mathbf e_1 \right)$ such that
$$
	\| \theta  \mathbf{q}_i+\mathbf{p}_i\|_{\balpha} < \varepsilon e^{-\kappa_i t_k} \quad
\mbox{and}	\quad
\|\mathbf q_i \|_{\bbeta} < \varepsilon e^{\kappa_i t_k}.
$$
Moreover, since $(\mathbf p_i, \mathbf q_i)\in\mathcal P_N(\gamma_i \mathbf e_1 )$, it satisfies
$(\mathbf p_i, \mathbf q_i)= \mathbf v_i \Mod N$. Therefore, setting $Q=e^{t_k}$ 
proves that there are arbitrarily large $Q\in\bbr$ satisfying the conclusion of Theorem~\ref{thm:B}.

\end{proof}

\section{A proof of Theorem~\ref{thm:KAlongProgressions}}\label{sec:proof2}

In this section it is shown how Theorem~\ref{thm:KAlongProgressions} falls into the framework of Kleinbock--Margulis \cite{kleinbockmargulis2}. As their theorems are rather general, a simplified and self-contained exposition is possible.

Given $\bfN\in\N^d$ and $\bv\in\Z^d$ as in the statment of Theorem~\ref{thm:KAlongProgressions}, it is enough to consider a single modular level $N$ (i.e.\ all $N_i=N$) as was done in the beginning of the proof of Theorem~\ref{thm:B} and that $\bv$ is primitive.

Now specify $\gamma\in\Gamma$ such that $\gamma \mathbf e_1=\mathbf v$.
For an approximation function $\psi$ and any $\gamma\in\Gamma$ define $x=g\Gamma_N\in X_N$ to be \textit{$\gamma$-congruence-$\psi$-approximable} if there exists $(\mathbf v,\mathbf w)\in g\gamma \Gamma_N\mathbf e_1$ with arbitrarily large $\|\mathbf w\|$ such that $\|\mathbf v\|^m\leq \psi(\|\mathbf w\|^n)$. Hence our task is to show that for any $\gamma\in\Gamma$, almost every $\theta\in M_{m,n}(\R)$, $u(\theta)\Gamma_N$ is $\gamma$-congruence-$\psi$-approximable.

Denote $x^\gamma=g\gamma \Gamma_N$ and define the function $\Delta$ on $X_N$ by
\[\Delta(g\Gamma_N)=\max_{\mathbf v\in g\Gamma_N\mathbf e_1}\log\left(\frac{1}{\|\mathbf v\|}\right).\]
Note that $\Delta^{-1}([T,\infty])=X_N^{\eps}$ for $\eps= e^{-T}$ so that
\[
\Psi_{\Delta}(T)=\mu\left(\Delta^{-1}([T,\infty])\right)=\mu\left(X_N^{e^{-T}}\right).
\]
Let \[
a(t)=\diag{e^{t/m},\dots,e^{t/m}, e^{-t/n},\dots, e^{-t/n}}.
\]

The following 
Borel--Cantelli lemma 
is often attributed to W.\ Schmidt (cf.\ \cite[Lemma 10, Chapter I]{sprindzhuk}):
\begin{theorem}
\label{thm:cantelli}
Let $(Y,\mu)$ be a measure space and $f_k:Y\to\bR_{\geq0}$ integrable, $0\leq g_k\leq h_k\leq 1$ two sequences of numbers, and suppose there exists $c>0$
\begin{equation}\label{eq:assumptioncantelli}
\int \left(\sum_{k\in[M,K]} f_k(x)- \sum_{k\in[M,K]}g_k\right)^2d\mu(x)\leq c\sum_{k\in[M,K]} h_k
\end{equation}
for any $M<K$, then for any $\eps>0$, if $E_K=\sum_{k\in[1,K]}g_k$, $F_K=\sum_{k\in[1,K]}h_k$ for $\mu$-a.e.\ x,
\begin{equation}\label{eq:conclusioncantelli}
\sum_{k\in[1,K]} f_k(x)= E_K+\cO_\eps\left(F_K^{1/2+\eps}\right).
\end{equation}
\end{theorem}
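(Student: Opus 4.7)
The plan is to follow the classical template of Schmidt's quantitative Borel--Cantelli lemmas: control the partial sums along a sparse subsequence where the $L^2$-variance bound becomes summable, and then fill in the remaining indices via a maximal inequality derived from the same variance bound.

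As a preliminary step I would reduce to the case $F_K\to\infty$ (if $F_\infty<\infty$, applying the hypothesis with $M=1$ and letting $K\to\infty$ shows that $(S_K - E_K)_K$ is Cauchy in $L^2$, so it converges $\mu$-a.e.\ and the conclusion is immediate) and discard indices with $h_k=0$ (the hypothesis with $M=K=k$ gives $\int (f_k-g_k)^2 \dd\mu \leq c\, h_k$, forcing $f_k=g_k=0$ almost everywhere). Fix $\eps>0$ and select $K_r$ with $F_{K_r}\in[2^r,2^{r+1})$, which is possible because $h_k\leq 1$. Chebyshev's inequality applied to the $M=1$ case of the hypothesis gives
\[
\mu\!\left\{x \sep |S_{K_r}(x)-E_{K_r}|>F_{K_r}^{1/2+\eps}\right\}\leq c\,F_{K_r}^{-2\eps}\leq c\,2^{-2\eps r},
\]
which is summable, so by Borel--Cantelli, for $\mu$-a.e.\ $x$ one has $|S_{K_r}(x)-E_{K_r}|\leq F_{K_r}^{1/2+\eps}$ for all sufficiently large $r$.

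To fill the gaps $K\in[K_r,K_{r+1}]$ I would dyadically split each interval $[K_r+1,K_{r+1}]$ into $O(\log(K_{r+1}-K_r))$ blocks and apply the second-moment hypothesis on each block to obtain a Menshov--Rademacher-type maximal inequality
\[
\int \max_{K_r\leq K\leq K_{r+1}} \Bigl((S_K - E_K)-(S_{K_r}-E_{K_r})\Bigr)^2 \dd\mu \leq c\,(F_{K_{r+1}}-F_{K_r})\,\log^2(K_{r+1}-K_r).
\]
A second application of Chebyshev and Borel--Cantelli with threshold $F_{K_{r+1}}^{1/2+\eps}$ then controls these maxima almost everywhere, and the triangle inequality combines the two bounds to give $|S_K(x)-E_K|\leq C(x,\eps)\,F_K^{1/2+\eps}$ for all large $K$. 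Intersecting over a sequence $\eps_n\downarrow 0$ completes the proof.

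The main obstacle is ensuring that the logarithmic factors from the maximal inequality do not spoil the Borel--Cantelli summability. Since $F_{K_r}$ grows geometrically while $F_{K_{r+1}}-F_{K_r}\leq 2^{r+1}$, the prefactor in the second Chebyshev bound is of order $2^{-2\eps r}\log^2(K_{r+1})$, and one must check that $\log K_r$ is polynomially bounded in $r$. This is where the reduction to $h_k>0$, followed by re-grouping long runs of small-$h_k$ indices into super-blocks each of total $h$-mass of order one, becomes essential; after such a regrouping $K_r=O(F_{K_r})=O(2^r)$, so $\log K_r=O(r)$ and the polylogarithmic factor is absorbed by a slight reduction of the exponent.
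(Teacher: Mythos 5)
The paper does not actually prove this statement: Theorem~\ref{thm:cantelli} is quoted as a known lemma of W.~Schmidt, with the proof delegated to Sprindzhuk (Lemma 10, Chapter I). Your proposal essentially reconstructs that classical argument --- Chebyshev plus Borel--Cantelli along a subsequence where $F_{K_r}$ grows geometrically, a Rademacher--Menshov dyadic maximal inequality built from the second-moment hypothesis to bridge the gaps, and a regrouping of indices into blocks of unit $h$-mass so that the polylogarithmic losses stay polynomial in $r$. This is the right route and the quantitative bookkeeping you indicate ($2^{-2\eps r}$ versus $\log^2 K_r = O(r^2)$ after regrouping) does close.

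Three points should be tightened. First, in the case $F_\infty<\infty$ you assert that $L^2$-Cauchyness of $S_K-E_K$ gives a.e.\ convergence; it does not by itself --- pass to an a.e.\ convergent subsequence and then use that $S_K$ and $E_K$ are nondecreasing (here $f_k\geq 0$, $g_k\geq 0$) and that the increments of $E_K$ tend to zero, to control the full sequence. Second, and more importantly, after you regroup into super-blocks your maximal inequality only controls the partial sums at super-block endpoints; for an arbitrary $K$ inside a super-block you must again invoke nonnegativity of $f_k$ and the fact that each block has $h$-mass (hence $g$-mass) $O(1)$, so that the within-block fluctuation is at most the block discrepancy plus $O(1)$, the former being a difference of two consecutive endpoint sums already covered by the maximal bound. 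This use of monotonicity is the one ingredient your sketch leaves implicit, and without it the filling step is incomplete. Third, the hypothesis is stated for $M<K$, so the single-index application ``$M=K=k$'' used to discard indices with $h_k=0$ is not literally available; fortunately that reduction is unnecessary, since the greedy regrouping absorbs such indices harmlessly.
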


For any $g\in\SLR[d]$ denote $\|g\|=\max{\left(\|g\|_\infty,\|g\|_\infty^{-1}\right)}$ and for any $L^2(X_N,\mu)$ let $g\acts f$ denote the function $x\mapsto f\left(g^{-1}x\right)$. Recall that the action of $\SLR[d]$ on $X_N$ has a spectral gap. This will be applied using the following special case of \cite[Corollary]{chh}. 
\begin{theorem}
\label{thm:mixing}
There exist constants $c,\delta>0$ such that any $g\in\SLR[d]$ and any $\SOR[d]$-invariant $f_1,f_2\in L^2(X_N,\mu)$ satisfy
\begin{equation}\label{eq:effmixing}
|\langle g\acts f_1,f_2\rangle-\mu(f_1)\mu(f_2)|\leq c\|g\|^{-\delta}\|f_1\|_2\|f_2\|_2.
\end{equation}
\end{theorem}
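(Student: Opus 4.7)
The plan is to derive this effective mixing estimate from the Cowling--Haagerup--Howe decay of matrix coefficients, using crucially that $f_1, f_2$ are $\SOR[d]$-invariant (hence spherical vectors for the regular representation).

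First I would decompose $L^2(X_N,\mu) = \bbr\cdot 1 \oplus L^2_0(X_N,\mu)$, where $L^2_0$ is the mean-zero subspace, and write $f_i = \mu(f_i) + f_i^0$ with $f_i^0 \in L^2_0$. Since constants are $G$-invariant and orthogonal to $L^2_0$, the problem reduces to bounding $|\langle g\acts f_1^0, f_2^0 \rangle|$, where $f_i^0$ is still $\SOR[d]$-invariant and satisfies $\|f_i^0\|_2 \leq \|f_i\|_2$. The structural input is that the unitary representation of $G = \SLR[d]$ on $L^2_0(X_N,\mu)$ has no nonzero $G$-invariant vectors (any $G$-invariant element of $L^2(X_N,\mu)$ is constant because $\Gamma_N$ has finite covolume) and admits a uniform spectral gap: for $d\geq 3$ this comes from Kazhdan's property (T), while for $d=2$ it comes from Selberg's theorem applied to the congruence subgroup $\Gamma_N$.

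Next I would invoke the main estimate of \cite{chh}: for any unitary representation $\pi$ of $\SLR[d]$ with no invariant vectors and spectral gap at least some fixed $\theta_0$, and any $K$-finite vectors $v,w$, one has
\[
|\langle \pi(g)v,w\rangle| \leq (\dim Kv)^{1/2}(\dim Kw)^{1/2}\,\Xi(g)^s\,\|v\|\|w\|,
\]
where $\Xi$ is the Harish--Chandra spherical function on $\SLR[d]$ and $s = s(\theta_0)>0$. For $K$-invariant $v=f_1^0,\,w=f_2^0$ the dimension factors equal $1$, yielding
\[
|\langle g\acts f_1^0, f_2^0\rangle| \leq \Xi(g)^s \|f_1^0\|_2 \|f_2^0\|_2.
\]

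The last ingredient is the classical polynomial estimate $\Xi(g) \ll (1+\log\|g\|)^{c_0}\|g\|^{-\eta_0}$ on $\SLR[d]$ for some $\eta_0>0$, obtained via the Cartan decomposition and integration over the unipotent radical. Absorbing the logarithmic factor at the cost of shrinking the exponent gives $\Xi(g)^s \leq c\|g\|^{-\delta}$ for some $\delta \in (0, s\eta_0)$, which combined with the bound above proves the theorem. The hard part is the representation-theoretic input, which is supplied directly by \cite{chh}; the remaining obstacle is essentially bookkeeping to translate decay in the Cartan $a$-coordinates into decay in the operator norm $\|g\|$ defined in the paper, but this is a standard computation once $\Xi$ is understood.
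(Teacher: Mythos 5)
Your proposal is correct and follows essentially the same route as the paper, which simply quotes this statement as a special case of the Corollary of \cite{chh} together with the known spectral gap on $L^2_0(X_N,\mu)$ (property (T) for $d\geq 3$, Selberg's theorem for the congruence subgroup $\Gamma_N$ when $d=2$). Your additional steps — reducing to mean-zero vectors, using $K$-invariance to kill the dimension factors, and converting the Harish--Chandra function bound $\Xi(g)^s$ into $c\|g\|^{-\delta}$ via the Cartan decomposition — are exactly the standard bookkeeping implicit in that citation.
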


Theorem~\ref{thm:cantelli} and Theorem~\ref{thm:mixing} are used together to prove the following corollary, which is a special case of \cite[Theorem 4.3]{kleinbockmargulis2}. Our argument  follows \cite[Proof of Theorem 1.2]{kleinbockmarguliserratum}.

\begin{corollary}\label{cor:borelcantelli}
If $\{r_t\}_{t\in \N}\subset [1,\infty)$ satisfy
\begin{equation}\label{eq:sumdivergence}
\sum_{t=1}^\infty\Psi_{\Delta}(r_t)=\infty
\end{equation}
then for almost all $x\in X_N$ there exist infinitely many $t\in\N$ such that $\Delta(a(t)x)\geq r_t$.
\end{corollary}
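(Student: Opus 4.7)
The plan is to view the corollary as a quantitative Borel--Cantelli statement for the sets
$A_t = \{x\in X_N : \Delta(a(t)x)\geq r_t\}$, $t\in\N$,
and derive it from Theorem~\ref{thm:cantelli} applied to $f_t = \mathbf{1}_{A_t}$ with the choice $g_t = h_t = \mu(A_t)$. Since Haar measure is $a(t)$-invariant,
\[
\mu(A_t) = \mu\bigl(\Delta^{-1}([r_t,\infty))\bigr) = \mu\bigl(X_N^{e^{-r_t}}\bigr) = \Psi_\Delta(r_t),
\]
so $E_K = F_K = \sum_{t\leq K}\Psi_\Delta(r_t)\to\infty$ by \eqref{eq:sumdivergence}. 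Once the variance hypothesis \eqref{eq:assumptioncantelli} is verified with these choices, the conclusion \eqref{eq:conclusioncantelli} forces $\sum_{t\leq K}\mathbf{1}_{A_t}(x)\to\infty$ for $\mu$-a.e.\ $x$, which is exactly the statement of the corollary.

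The crux is therefore estimating the pairwise correlations using mixing. Set $\phi_t := \mathbf{1}_{X_N^{e^{-r_t}}}$; the substitution $y=a(k)x$ yields
$\mu(A_k\cap A_{k'}) = \langle \phi_k,\, a(k-k')\acts \phi_{k'}\rangle$.
The subset $X_N^\varepsilon$ is left $K$-invariant (since $K\cdot KA_\varepsilon U = KA_\varepsilon U$), so each $\phi_t$ is a $K$-invariant element of $L^2(X_N,\mu)$ with $\|\phi_t\|_2^2 = \Psi_\Delta(r_t)$. Theorem~\ref{thm:mixing} then gives
\[
\bigl|\mu(A_k\cap A_{k'}) - \mu(A_k)\mu(A_{k'})\bigr| \leq c\,\|a(k-k')\|^{-\delta}\sqrt{\Psi_\Delta(r_k)\Psi_\Delta(r_{k'})},
\]
and since $\|a(t)\|$ grows exponentially in $|t|$, the first factor decays like $e^{-\delta'|k-k'|}$ for some $\delta'>0$. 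Expanding the square in \eqref{eq:assumptioncantelli}, the diagonal contributes at most $\sum_{k\in[M,K]}\Psi_\Delta(r_k)$, and the off-diagonal is controlled by first applying the inequality $\sqrt{ab}\leq (a+b)/2$ and then summing the resulting geometric tail in $|k-k'|$, giving an overall bound $C\sum_{k\in[M,K]}\Psi_\Delta(r_k)$, which is exactly $C\sum h_k$.

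The main obstacle is just this off-diagonal bookkeeping: the factor $\sqrt{\Psi_\Delta(r_k)\Psi_\Delta(r_{k'})}$ supplied by the mixing estimate must be processed so that the double sum collapses to a single sum matching $E_K$, since the trivial bound $O(K-M)$ would be useless whenever $\Psi_\Delta(r_t)$ is small on average and $E_K$ grows sublinearly. The one technical point that must be verified cleanly, and that underlies the whole argument, is that left multiplication by $K$ preserves the Iwasawa factor $KA_\varepsilon U$ appearing in the definition of $X_N^\varepsilon$; this is what places $\phi_t$ in the $K$-invariant isotypic component and lets one invoke Theorem~\ref{thm:mixing} directly on the indicator functions without any prior smoothing.
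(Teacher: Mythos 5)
Your proposal is correct and follows essentially the same route as the paper: the same application of Schmidt's Borel--Cantelli lemma (Theorem~\ref{thm:cantelli}) with $f_k=a(k)^{-1}\acts\chi_k$ and $g_k=h_k=\Psi_\Delta(r_k)$, combined with the spectral-gap estimate of Theorem~\ref{thm:mixing} applied directly to the $\SOR[d]$-invariant indicators of $X_N^{\eps_k}$. The only (harmless) difference is cosmetic: you collapse the off-diagonal geometric means via $\sqrt{ab}\le(a+b)/2$, whereas the paper uses the nesting $X_N^{\eps}\subseteq X_N^{\eps'}$ for $\eps\le\eps'$ to replace $\sqrt{\mu(X_N^{\eps_k})\mu(X_N^{\eps_\ell})}$ by the larger of the two measures.
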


\begin{proof}
We wish to apply Theorem~\ref{thm:cantelli} to $Y=X_N$ with
$$
f_k=a(k)^{-1}\acts \chi_k
$$
and
$$
g_k=h_k=\mu\left(X_N^{\eps_k}\right)=\Psi_{\Delta}(r(k))\leq1,
$$
where $\chi_k$ is the characteristic function of $X_N^{\eps_k}$ and
$$
\eps_k=e^{-r(k)}.
$$
Using the notation of Theorem~\ref{thm:cantelli}, we have
$$
E_K=F_K=\sum_{k=1}^K \mu\left(X_N^{{\eps_k}}\right),
$$
which diverges by \eqref{eq:sumdivergence}. Assume that \eqref{eq:assumptioncantelli} holds. Then \eqref{eq:conclusioncantelli} implies, in particular, that $\sum_{k=1}^K f_k(x)$ diverges for almost every $x$. Therefore, $a(k)x\in X_N^{\eps_k}$, or, equivalently, $\Delta(a(k)x)\geq r_k$ for infinitely many $k$, which is the claim of the corollary.

In order to verify \eqref{eq:assumptioncantelli}, first note that $X_N^\eps$ is an $\SOR[d]$-invariant set for any $\eps$, and $\|\chi_k\|_2=\mu(X_N^{\eps_k})^{\frac12}$ for any $k\in\bbn$. Therefore, by \eqref{eq:effmixing}:
\begin{align*}
\left|\sum_{k,\ell\in[M,K]}\right.&\left.\vphantom{\sum_k}\langle a(\ell)a(k)^{-1}\acts\chi_k, \chi_\ell\rangle  - \mu(X_N^{\eps_k})\mu(X_N^{\eps_\ell})\right| \leq \\
& c \sum_{k,\ell\in[M,K]} \|a(\ell)a(k)^{-1}\|^{-\delta} \mu(X_N^{\eps_k})^{\frac12}\mu(X_N^{\eps_\ell})^{\frac12} = \\
& c\sum_{k,\ell\in[M,K]:\eps_k<\eps_\ell} e^{\frac{-\delta|\ell-k|}{\min{(m,n)}}} \mu(X_N^{\eps_\ell})
+c\sum_{k,\ell\in[M,K]:\eps_k\geq\eps_\ell} e^{\frac{-\delta|\ell-k|}{\min{(m,n)}}} \mu(X_N^{\eps_k}) = \\
& c\sum_{k,\ell\in[M,K]} e^{\frac{-\delta|\ell-k|}{\min{(m,n)}}} \mu(X_N^{\eps_\ell})
+c\sum_{k,\ell\in[M,K]} e^{\frac{-\delta|\ell-k|}{\min{(m,n)}}} \mu(X_N^{\eps_k}) \leq \\
& 2c \sum_{k\in[M,K]} \mu(X_N^{\eps_k}) \sum_{\ell\in[M,K]}e^{\frac{-\delta|\ell-k|}{\min{(m,n)}}} \leq c' \sum_{k\in[M,K]} \mu(X_N^{\eps_k}),
\end{align*}
where $c'=4c\sum_{\ell=0}^\infty e^{\frac{-\delta\ell}{\min{(m,n)}}}$ (cf. \cite[Section 4.4]{kleinbockmargulis2}).
\end{proof}

We need one final tool, which associates to any approximation function $\psi$ a rate function $\cD_{m,n}(\psi)=r$ (named after S.G. Dani) and vice versa \cite[Lemma 8.3]{kleinbockmargulis2}:

\begin{lemma}\label{lem:changeofvar}
Given $x_0>0$ and $\psi:[x_0,\infty)\to(0,\infty)$ be a non-increasing continuous 
function, there exists a unique continuous function $r:[t_0,\infty)\to\R$ with $t_0=\{\frac{m}{m+n}\log x_0-\frac{n}{m+n}\log\psi(x_0)\}$ satisfying
\begin{align}
\lambda(t) : & =  t-nr(t) && \text{ is strictly increasing and unbounded,}  \label{eq:dania}\\
L(t) : & =  t+mr(t)  && \text{ is nondecreasing, and}  \label{eq:danib}\\
\psi(e^{t-nr(t)}) & =  e^{-t-mr(t)} && \text{ for all } t\geq t_0.  \label{eq:danic}
\end{align}
Conversely, given $t_0\in\R$ and a continuous function $r:[t_0,\infty)\to\R$ such that \eqref{eq:dania} holds, there exists a unique continuous non-increasing function $\psi:[x_0,\infty)\to(0,\infty)$ with $x_0=e^{t_0-nr(t_0)}$ satisfying \eqref{eq:danic}.

Furthermore,
\begin{align}\label{eq:danid}
\int_{x_0}^\infty \psi(x) dx <\infty && \text{   if and only if   } && \int_{t_0}^\infty e^{-(m+n)r(t)} dt <\infty.
\end{align}
\end{lemma}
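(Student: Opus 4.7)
The plan is to prove both directions of the correspondence by an explicit parametric construction and then to obtain the integral equivalence \eqref{eq:danid} by a change of variables coupled with a Stieltjes integration by parts.

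For the forward direction ($\psi \Rightarrow r$), I would first note that with the shorthand $\lambda = t - nr$ and $L = t + mr$, the relation \eqref{eq:danic} forces $L(t) = u(\lambda(t))$ where $u(s) := -\log \psi(e^s)$, and the definitions algebraically imply $(m+n)t = m\lambda(t) + nL(t)$. Reversing this logic, I would set
\[
T(s) := \tfrac{m}{m+n}\,s + \tfrac{n}{m+n}\,u(s), \qquad s \geq \log x_0,
\]
which is continuous and strictly increasing because the affine term dominates the continuous non-decreasing term $u$. Inverting $T$ produces a continuous strictly increasing $\lambda:[t_0,\infty)\to[\log x_0,\infty)$ with $T(\log x_0)=t_0$ matching the stated initial condition, and $r(t) := (t - \lambda(t))/n$ is the desired function; \eqref{eq:dania}--\eqref{eq:danic} follow by direct substitution, and uniqueness is automatic since any admissible $r$ recovers the same $(\lambda,L)$ by the same algebra. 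The converse ($r \Rightarrow \psi$) is shorter: condition \eqref{eq:dania} makes $\lambda$ a homeomorphism $[t_0,\infty)\to[\log x_0,\infty)$, so $\psi(x) := e^{-L(\lambda^{-1}(\log x))}$ is well-defined, continuous, and non-increasing because $L$ is non-decreasing and $\lambda^{-1}$ strictly increasing.

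For the integral equivalence \eqref{eq:danid}, the substitution $x = e^{\lambda(t)}$ combined with the identity $\lambda(t) - L(t) = -(m+n)r(t)$ gives
\[
\int_{x_0}^\infty \psi(x)\,dx \,=\, \int_{t_0}^\infty e^{-(m+n)r(t)}\,d\lambda(t).
\]
Along the curve $(\lambda(t),L(t))$ the Stieltjes identity $(m+n)\,dt = m\,d\lambda + n\,dL$ and $dL \geq 0$ yield $d\lambda \leq \tfrac{m+n}{m}\,dt$, so one direction is immediate: $\int\psi\,dx \leq \tfrac{m+n}{m}\int e^{-(m+n)r}\,dt$. For the reverse bound I would split
\[
\int_{t_0}^\infty e^{-(m+n)r}\,dt \,=\, \tfrac{m}{m+n}\int_{x_0}^\infty \psi\,dx \,+\, \tfrac{n}{m+n}\int_{t_0}^\infty e^{\lambda - L}\,dL,
\]
and evaluate the second integral by Stieltjes integration by parts against $d(e^{-L}) = -e^{-L}\,dL$, obtaining $x_0\psi(x_0) - \lim_{x\to\infty}x\psi(x) + \int\psi\,dx$. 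The boundary term at infinity vanishes by the elementary inequality $(x/2)\psi(x) \leq \int_{x/2}^x \psi$, which holds for any non-increasing integrable $\psi$.

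The main obstacle is the bookkeeping in this last step: since $L$ is only assumed non-decreasing, the argument must be phrased throughout in terms of Stieltjes measures rather than derivatives, and the vanishing of the boundary term $x\psi(x)$ at infinity has to be justified purely from monotonicity and integrability of $\psi$, without any regularity beyond continuity.
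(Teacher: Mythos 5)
The paper does not prove this lemma at all: it is quoted, with a citation, from Kleinbock--Margulis \cite[Lemma 8.3]{kleinbockmargulis2}, so there is no in-paper argument to compare yours against; I can only assess your proof on its merits, and it is essentially the standard argument from that source. Your forward construction is correct: with $u(s)=-\log\psi(e^s)$ non-decreasing and continuous, $T(s)=\tfrac{m}{m+n}s+\tfrac{n}{m+n}u(s)$ is a continuous strictly increasing bijection of $[\log x_0,\infty)$ onto $[t_0,\infty)$, and $\lambda=T^{-1}$, $r=(t-\lambda)/n$ give \eqref{eq:dania}--\eqref{eq:danic}, with uniqueness forced by the identity $(m+n)t=m\lambda(t)+nL(t)$ together with $L=u\circ\lambda$ (note only that \eqref{eq:danic} implicitly requires $\lambda(t)\ge\log x_0$, which is what makes the uniqueness algebra legitimate). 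The integral equivalence is also sound: $\int_{x_0}^\infty\psi\,dx=\int_{t_0}^\infty e^{-(m+n)r}\,d\lambda$, the measure identity $(m+n)\,dt=m\,d\lambda+n\,dL$ with $dL\ge0$ gives one inequality, and your splitting plus Stieltjes integration by parts gives the other; in fact the boundary term $-X\psi(X)$ is nonpositive and can simply be discarded, so the step showing $x\psi(x)\to0$ is not even needed. The one point to flag is in the converse direction: you justify that $\psi(x)=e^{-L(\lambda^{-1}(\log x))}$ is non-increasing ``because $L$ is non-decreasing'', but the converse as stated here hypothesizes only \eqref{eq:dania}, not \eqref{eq:danib}. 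This is a defect of the statement as transcribed rather than of your argument: without \eqref{eq:danib} the unique continuous $\psi$ determined by \eqref{eq:danic} need not be non-increasing (e.g.\ $r(t)=-t$ with $m\ge2$ yields $\psi(x)=x^{(m-1)/(1+n)}$), and the original Kleinbock--Margulis lemma assumes both \eqref{eq:dania} and \eqref{eq:danib} in the converse; you should state explicitly that you are using \eqref{eq:danib} there.
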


Using the notation of the previous lemma it is possible to state an appropriate extension of Dani's correspondence (cf.\ \cite[Theorem 8.5]{kleinbockmargulis2}):

\begin{lemma}\label{lem:danicorrespondence}
$x\in X_N$ is $\gamma$-congruence-$\psi$-approximable if and only if there exist arbitrarily large positive $t$ such that $\Delta(a(t)x^\gamma)\geq r_t$.
\end{lemma}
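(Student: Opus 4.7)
The plan is to establish the equivalence directly by comparing $\|a(t)\mathbf u\|$ on integer lattice vectors $\mathbf u=(\mathbf v,\mathbf w)$ with the approximation inequality $\|\mathbf v\|^m\le\psi(\|\mathbf w\|^n)$, using the defining relation $\psi(e^{\lambda(t)})=e^{-L(t)}$ from \eqref{eq:danic} as the bridge. The starting point is the elementary identity
\[
\|a(t)(\mathbf v,\mathbf w)\|=\max\bigl(e^{t/m}\|\mathbf v\|,\; e^{-t/n}\|\mathbf w\|\bigr),
\]
valid for any $(\mathbf v,\mathbf w)\in\mathbb R^m\times\mathbb R^n$. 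Hence $\|a(t)(\mathbf v,\mathbf w)\|\le e^{-r_t}$ is equivalent to the pair of inequalities $\|\mathbf v\|^m\le e^{-L(t)}=\psi(e^{\lambda(t)})$ and $\|\mathbf w\|^n\le e^{\lambda(t)}$.

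For the forward direction I would take a pair $(\mathbf v,\mathbf w)\in g\gamma\Gamma_N\mathbf e_1$ with $\|\mathbf w\|$ arbitrarily large and $\|\mathbf v\|^m\le\psi(\|\mathbf w\|^n)$, and solve $e^{\lambda(t)}=\|\mathbf w\|^n$ uniquely for $t$; this is possible by the monotonicity and unboundedness of $\lambda$ in \eqref{eq:dania}. Both displayed inequalities then hold, so $\Delta(a(t)x^\gamma)\ge r_t$, and $t\to\infty$ as $\|\mathbf w\|\to\infty$. For the backward direction, for each $t_k\to\infty$ with $\Delta(a(t_k)x^\gamma)\ge r_{t_k}$, I would pick a primitive lattice vector $(\mathbf v_k,\mathbf w_k)\in g\gamma\Gamma_N\mathbf e_1$ realising the bound, and read off $\|\mathbf v_k\|^m\le\psi(e^{\lambda(t_k)})\le\psi(\|\mathbf w_k\|^n)$ using that $\psi$ is non-increasing.

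The step I expect to be the main obstacle is verifying that $\|\mathbf w_k\|$ is unbounded along the sequence. I would argue by contradiction via the discreteness of the lattice $g\gamma\mathbb Z^d$: boundedness of $\|\mathbf w_k\|$ combined with the uniform estimate $\|\mathbf v_k\|^m\le e^{-L(t_k)}\le e^{-L(t_1)}$ (from \eqref{eq:danib}) forces the pairs to take only finitely many values, hence to stabilise at some $(\mathbf v^*,\mathbf w^*)$ along a subsequence. The case $\mathbf v^*\ne 0$ is ruled out because $e^{t_k/m}\|\mathbf v^*\|\to\infty$ would eventually exceed $e^{-r_{t_k}}$. In the remaining case $\mathbf v^*=0$, the primitive integer preimage of $(\mathbf 0,\mathbf w^*)$ sits inside the $n$-plane $(g\gamma)^{-1}(\{0\}\times\mathbb R^n)$; provided this plane is rational, it contributes a full rank-$n$ sublattice of integer points, which in turn produces infinitely many trivial approximations $(\mathbf 0,\mathbf w)\in g\gamma\Gamma_N\mathbf e_1$ with $\|\mathbf w\|\to\infty$ and so yields $\gamma$-congruence-$\psi$-approximability directly.
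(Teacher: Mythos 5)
Your forward implication coincides with the paper's own argument (the paper proves only that half and dismisses the converse as ``similar''), so that part is fine. The genuine problem is the backward half, and it sits exactly at the step you flagged, the unboundedness of $\|\mathbf w_k\|$. In the case $\mathbf v^*\neq 0$, the contradiction you claim does not follow from $e^{t_k/m}\|\mathbf v^*\|\to\infty$ alone: the right-hand side $e^{-r_{t_k}}=e^{(t_k-L(t_k))/m}$ grows at the same exponential rate whenever $L$ stays bounded, and the comparison reduces to $\|\mathbf v^*\|^m\le e^{-L(t_k)}$, which is contradictory only if $L(t_k)\to\infty$, i.e.\ only if $\psi(x)\to 0$ as $x\to\infty$ --- not among the hypotheses (think of $\psi$ constant). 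In the case $\mathbf v^*=0$ your argument is conditional (``provided this plane is rational''), and even granting rationality, producing infinitely many points of $g\gamma\Gamma_N\mathbf e_1$ of the form $(\mathbf 0,\mathbf w)$ overlooks that this orbit consists only of \emph{primitive} integer vectors lying in a fixed congruence class modulo $N$: for $n=1$ a rational line contains exactly two primitive integer vectors, so the ``trivial approximations'' are finitely many, and for $n\ge 2$ primitivity together with the congruence constraint still requires an argument you have not given.

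Moreover, no repair of this step can work pointwise for every $x$. Take $m=n=1$, $\theta=a/b$ with $\gcd(a,b)=1$, $\gamma\in\Gamma$ with $\gamma\mathbf e_1=(-a,b)$, and $\psi(x)=e^{-x}$. Then $u(\theta)(-a,b)=(0,b)$, so $\Delta(a(t)x^\gamma)\ge t-\log b\ge r_t$ for all large $t$ (here $r_t=t-\lambda(t)$ with $\lambda(t)+e^{\lambda(t)}=2t$, so $\lambda(t)\to\infty$ very slowly), yet any $(\mathbf v,\mathbf w)\in u(\theta)\mathcal P_N(\gamma\mathbf e_1)$ with $\|\mathbf w\|>\log b$ and $\|\mathbf v\|\le\psi(\|\mathbf w\|)$ must have $\mathbf v=0$, hence comes from $\pm(-a,b)$ and has $\|\mathbf w\|=b$; so $x=u(\theta)\Gamma_N$ is not $\gamma$-congruence-$\psi$-approximable although the dynamical condition holds for all large $t$. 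Thus the backward implication in the form you are trying to prove fails at such special points (primitivity plus the congruence class forbid the usual rescue by integer multiples), and it can only be established after excluding an explicit null set of $x$ (for instance those $x^\gamma$ whose orbit contains a vector with vanishing first block) and, for general $\psi$, after reducing to the case $\psi(x)\to 0$; that weaker, almost-everywhere statement is all the application in Corollary~\ref{thm:psiapproximable} actually needs, and your write-up should be reformulated accordingly rather than asserted for every $x\in X_N$.
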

\begin{proof} 
We only prove the forward direction, the proof of the converse is similar and will be omitted.

Assume 
that $x$ is $\gamma$-congruence-$\psi$-approximable. Let $(\mathbf v,\mathbf w)\in g\gamma \Gamma_N\mathbf e_1$ be a solution to $\|\mathbf v\|^m\leq \psi(\|\mathbf w\|^n)$. Choose $t$ such that $\|\mathbf w\|^n=e^{\lambda(t)}$, where $\lambda$ is as in \eqref{eq:dania} (this is possible if $\|\mathbf w\|$ is sufficiently large). 
It follows from \eqref{eq:danic} that $\|\mathbf v\|^m \leq \psi\left(e^{\lambda(t)}\right) = e^{-t-mr(t)}$,
or, equivalently, $e^{t/m}\|\mathbf v\|\leq e^{-r(t)}$. By definition of $t$, also $e^{-t/n}\|\mathbf w\|=e^{-r(t)}$. Consequently $\|a(t)(\mathbf v, \mathbf w)\|\leq e^{-r(t)}$, hence, $\Delta(a(t)x^\gamma)\geq r_t$.
\end{proof}

\begin{remark}\leavevmode
\label{rem:quasiincreasing}
\item(i) It will be important to note that in view of \eqref{eq:danid}, it is true that $x$ is $\gamma$-congruence-$\psi$-approximable if and only if for any fixed constant $c$ we have $\Delta(a(t)x^\gamma)\geq r_t-c$.

\item(ii) Using Iwasawa coordinates of $\SLR[d]$, one observes that $\Psi_{\Delta}(T)$ is bounded from below and above by $e^{-(m+n)T}$ up to some multiplicative constants for all sufficiently large $T$. See e.g.\ the proof of \cite[Lemma 3.10]{EMM}.

\end{remark}

The following corollary is deduced from Corollary~\ref{cor:borelcantelli} and Lemma~\ref{lem:danicorrespondence} (cf.\ \cite[Theorem 8.2]{kleinbockmargulis2}):
\begin{corollary}\label{thm:psiapproximable}
Let $\psi:[1,\infty)\to(0,\infty)$ be a non-increasing continuous 
function. If $\int_1^\infty\psi(x)dx = \infty$ then $\mu$ almost every $x\in X_N$ is $\gamma$-congruence-$\psi$-approximable for every $\gamma\in\Gamma$.
\end{corollary}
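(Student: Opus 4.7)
The plan is to combine Dani's change of variables (Lemma~\ref{lem:changeofvar}), Dani's correspondence with congruence (Lemma~\ref{lem:danicorrespondence}), and the quantitative Borel--Cantelli statement (Corollary~\ref{cor:borelcantelli}), in the spirit of Kleinbock--Margulis. Since $\Gamma=\SLZ[d]$ is countable, it is enough to fix a single $\gamma\in\Gamma$ and show that for $\mu$-almost every $x\in X_N$ the point $x$ is $\gamma$-congruence-$\psi$-approximable; a countable intersection then yields the statement for every $\gamma$ simultaneously.

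First I would feed $\psi$ into Lemma~\ref{lem:changeofvar} to produce a continuous rate function $r=\cD_{m,n}(\psi)\colon [t_0,\infty)\to\bbr$. The hypothesis $\int_1^\infty \psi(x)\,dx=\infty$ translates via \eqref{eq:danid} into $\int_{t_0}^\infty e^{-(m+n)r(t)}\,dt=\infty$. By Remark~\ref{rem:quasiincreasing}(ii), $\Psi_{\Delta}(T)$ and $e^{-(m+n)T}$ agree up to multiplicative constants for large $T$, so it suffices to upgrade the continuous divergence to the discrete divergence $\sum_{t\in\bbn}\Psi_{\Delta}(r_t)=\infty$ with $r_t=r(t)$ (truncating any small values of $r_t$ up to $1$ and absorbing the resulting constant shift via Remark~\ref{rem:quasiincreasing}(i)). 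Once this is established, Corollary~\ref{cor:borelcantelli} gives that for $\mu$-a.e.\ $y\in X_N$ there are infinitely many $t\in\bbn$ with $\Delta(a(t)y)\geq r_t$. Since the Haar measure $\mu$ on $X_N$ is invariant under right multiplication by $\gamma$, specializing to $y=x^\gamma$ shows that for $\mu$-a.e.\ $x\in X_N$ there are infinitely many $t$ satisfying $\Delta(a(t)x^\gamma)\geq r_t$. Lemma~\ref{lem:danicorrespondence} then identifies exactly these $x$ as the $\gamma$-congruence-$\psi$-approximable points, finishing the argument.

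The main technical step is the passage from $\int e^{-(m+n)r(t)}\,dt=\infty$ to $\sum_t e^{-(m+n)r(t)}=\infty$. This conversion can fail for arbitrary $r$, but here the monotonicity requirements \eqref{eq:dania} on $\lambda(t)=t-nr(t)$ and \eqref{eq:danib} on $L(t)=t+mr(t)$ force the two-sided bound $-1/m\leq r(t+1)-r(t)<1/n$, so $e^{-(m+n)r(t)}$ oscillates by at most a fixed multiplicative factor on unit intervals, making the discrete and continuous divergences equivalent. Everything else is routine assembly of the tools already developed in the section.
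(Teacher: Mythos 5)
The analytic half of your plan is fine and matches the paper: \eqref{eq:danid} turns the divergence of $\int_1^\infty\psi$ into $\int e^{-(m+n)r(t)}\,dt=\infty$, your two-sided bound $-1/m\le r(t+1)-r(t)<1/n$ (only the lower bound, coming from \eqref{eq:danib}, is actually needed) gives $\sum_k e^{-(m+n)r(k)}=\infty$, Remark~\ref{rem:quasiincreasing}(ii) converts this into \eqref{eq:sumdivergence}, and the truncation $r_t\mapsto\max(r_t,1)$ required by the hypothesis $\{r_t\}\subset[1,\infty)$ of Corollary~\ref{cor:borelcantelli} is harmless (either infinitely many truncated terms each contribute at least $\Psi_\Delta(1)>0$, or only finitely many terms are altered), and since $\max(r_t,1)\ge r_t$ the conclusion you need follows without even invoking Remark~\ref{rem:quasiincreasing}(i).

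The genuine gap is the transfer step ``since $\mu$ is invariant under right multiplication by $\gamma$, specialize to $y=x^\gamma$.'' The subgroup $\Gamma_N=\{\gamma\in\Gamma:\gamma\mathbf e_1=\mathbf e_1 \text{ mod } N\}$ is \emph{not} normal in $\Gamma$ (it is the stabilizer of a vector modulo $N$, not a principal congruence subgroup), so $g\Gamma_N\mapsto g\gamma\Gamma_N$ is not a well-defined map on $X_N$; there is no ``right multiplication by $\gamma$'' acting on $X_N$ whose measure invariance you could invoke. Relatedly, for a fixed $\gamma$ the property of being $\gamma$-congruence-$\psi$-approximable depends on the chosen representative $g$ of $x=g\Gamma_N$ (replacing $g$ by $g\delta$ with $\delta\in\Gamma_N$ replaces the coset $\gamma\Gamma_N$ by $\delta\gamma\Gamma_N$); only the statement quantified over \emph{all} $\gamma$ descends to $X_N$, so your scheme of fixing $\gamma$, proving an a.e.\ statement on $X_N$, and intersecting afterwards already needs care at this point. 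This is exactly where the paper departs from the naive argument: it takes a fundamental domain $F$ of $X$, coset representatives $\gamma_1,\dots,\gamma_\ell$ of $\Gamma/\Gamma_N$, notes that $E=\bigcup_i F\gamma_i$ is a fundamental domain of $X_N$ and that $\{g\gamma\Gamma_N:\gamma\in\Gamma\}=\{g\gamma_i\Gamma_N:1\le i\le\ell\}$ is finite, and intersects the finitely many conull subsets $F_i\subset F$ on which $F_i\gamma_i\Gamma_N\subset X'$. Equivalently, one can repair your step by lifting to $G$: the set $G'=\{g\in G:\Delta(a(t)g\Gamma_N)\ge r_t \text{ for infinitely many } t\}$ is conull and right-$\Gamma_N$-invariant, right translation on $G$ does preserve Haar measure, so $\bigcap_{\gamma\in\Gamma}G'\gamma^{-1}$ is conull, right-$\Gamma$-invariant, and descends to a conull subset of $X_N$ all of whose points are $\gamma$-congruence-$\psi$-approximable for every $\gamma$. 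Without some such argument the key step of your proof fails as written.
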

\begin{proof}
\sloppy
Since $\int_1^\infty\psi(x)dx=\infty$, by \eqref{eq:danid}, $\int_{1}^\infty e^{-(m+n)r(t)} dt=\infty$.
It follows from \eqref{eq:danib} that $\sum_{k\in\bN}e^{-(m+n)r(k)}=\infty$. By Remark~\ref{rem:quasiincreasing}(ii) the sum in \eqref{eq:sumdivergence} diverges. Hence, Corollary~\ref{cor:borelcantelli} implies that for almost all $x\in X_N$ there are arbitrarily large $t$ that satisfy $\Delta(a(t)x)\geq r_t$. Call this conull set $X'$. Fix $F$ to be a fundamental domain of $X$, so that $X=\bigcup_{\gamma\in\Gamma}F\gamma$,
and let $\{\gamma_1,\ldots,\gamma_\ell\}\subseteq\Gamma_N/\Gamma$ be a system of coset representatives. Then
$$
E=\bigcup_{i=1}^\ell F\gamma_i
$$
is a fundamental domain of $X_N$. Hence, there exists a conull set $E'\subset E$ (with respect to the Haar of $G$ restricted to $E$) such that $E\Gamma_N\subset X'$. Therefore, for every $1\leq i\leq \ell$, there exists a conull set $F_i\subset F$ (with respect to Haar of $G$ restricted to $F$) such that $F_i\gamma_i\Gamma_N\subset X'$. Let
$$
F' = \bigcap_{i=1}^\ell F_i.
$$
It follows that
$$
X''=\bigcup_{i=1}^\ell F'\gamma_i\Gamma_N
$$
is conull in $X_N$, and any $x\in X''$ is $\gamma$-congruence-$\psi$-approximable for every $\gamma\in\Gamma$.
\end{proof}

The final step is to pass from Corollary~\ref{thm:psiapproximable} to a similar statement about $u(\theta)\Gamma_N$ for Lebesgue almost every $\theta\in M_{m,n}(\R)$. Let $B$ denote the subgroup of $\SLR[d]$ consisting of all lower triangular block matrices of the form \(\mat{B_1&0\\B_2& B_3}\), and let $U$ denote the group generated by $u(\theta)$ as $\theta$ varies in $M_{m,n}(\R)$. The argument of \cite[Section~8.7]{kleinbockmargulis2} relies on the fact that for any $x\in X$ there are $b\in B$ and $u\in U$ such that $x=bu\Gamma$. However, there's no similar decomposition for elements in $X_N$ so we must slightly modify their argument as follows:

\begin{proof}[Proof of Theorem~\ref{thm:KAlongProgressions}]
By the implicit function theorem, the map $(b,u)\mapsto bu$ is a local diffeomorphism from $B\times U$ to $G$. It follows that there exist open neighborhoods $\Omega_B\subset B$ and $\Omega_U\subset U$ such that $\Omega_G = \Omega_B\Omega_U$ is an open neighborhood of $\Id\in\SLR[d]$. Let
$$
U=\bigcup_{k=1}^\infty\Omega_Uu_k
$$
be a cover of $U$ by countably many translates of $\Omega_U$ such that $\Omega_Gu_k\gamma$ is injective in $X_N$  for all $\gamma\in \Gamma_N/\Gamma$. Since $\Omega_Gu_j\gamma\Gamma_N$ is open, it follows from Corollary~\ref{thm:psiapproximable} that almost every point in $\Omega_Gu_k\Gamma_N$ is $\gamma$-congruence-$\psi$-approximable.
By Fubini's theorem, for almost every $u\in\Omega_u$ there exists a set of full measure $\Omega_B'\subset \Omega_B$ such that $buu_k\Gamma_N$ is $\gamma$-congruence-$\psi$-approximable for every $b\in\Omega_B'$. Since $B$ is the weak stable submanifold of $a(t)$, it satisfies
$$
a(t)\Omega_Ba(t)^{-1}\subset\Omega_B.
$$
It follows from the uniform continuity of $\Delta$ that if $bu(\theta)u_k\gamma\Gamma_N\in \Omega_Gu_k\gamma\Gamma_N$ then
$$
\Delta(a(t)bu(\theta)u_k\gamma\Gamma_N)\geq r_t \Longrightarrow \Delta(a(t)u(\theta)u_k\gamma\Gamma_N)\geq r_t-c,
$$
where the constant $c$ depends on the size of $\Omega_B$. In view of Lemma~\ref{lem:danicorrespondence} and Remark~\ref{rem:quasiincreasing}(i) we conclude that $a(t)u(\theta)u_k\Gamma_N$ is $\gamma$-congruence-$\psi$-approximable for almost all $u\in\Omega_U$. Taking countable intersection over $\gamma$ and $u_k$ shows the theorem.
\end{proof}

\subsection*{Acknowledgements}
The authors wish to thank Barak Weiss for pointing out to them the connection between congruence conditions and cusps of $X_N$.


\vspace*{\fill}
\noindent Erez Nesharim \\ Faculty of Mathematics\\ Technion \\ Israel \\ {\tt ereznesh@gmail.com }\\ \\
\noindent Rene R\"{u}hr \\ Faculty of Mathematics\\ Technion \\ Israel \\ {\tt rener@campus.technion.ac.il }\\ \\
\noindent Ronggang Shi \\  Shanghai Center for Mathematical Sciences \\ Jiangwan Campus \\ Fudan University \\ No.2005 Songhu Road, Shanghai, 200433, China \\  {\tt ronggang@fudan.edu.cn}\\ \\

\bibliographystyle{siam}
\bibliography{bibdeskfile}
\end{document}